\theoremstyle{plain}
\newtheorem{prop}{Proposition}
\newtheorem{theo}[prop]{Theorem}
\newtheorem{coro}[prop]{Corollary}
\theoremstyle{definition}
\newtheorem{defi}[prop]{Definition}
\newtheorem{conj}[prop]{Conjecture}
\newtheorem{rema}[prop]{Remark}
\newtheorem{exam}[prop]{Example}
\newcommand{\Sym}{\mathrm{Sym}}
\newcommand{\Pic}{\mathrm{Pic}}
\newcommand{\NE}{\mathrm{NE}}
\newcommand{\rN}{\mathrm{N}}
\newcommand{\oNE}{\overline{\mathrm{NE}}}
\newcommand{\cBK}{\mathcal{BK}}
\newcommand{\ocBK}{\overline{\mathcal{BK}}}
\newcommand{\ocK}{\overline{\mathcal{K}}}
\newcommand{\ocC}{\overline{\mathcal{C}}}
\newcommand{\nod}{\mathrm{nod}}
\newcommand{\HKT}{\mathrm{K3}^{[2]}}
\newcommand{\ra}{\rightarrow}
\newcommand{\bP}{{\mathbb P}}
\newcommand{\bC}{{\mathbb C}}
\newcommand{\bG}{{\mathbb G}}
\newcommand{\bN}{{\mathbb N}}
\newcommand{\bQ}{{\mathbb Q}}
\newcommand{\bR}{{\mathbb R}}
\newcommand{\bZ}{{\mathbb Z}}
\newcommand{\cC}{{\mathcal C}}
\newcommand{\cF}{{\mathcal F}}
\newcommand{\cK}{{\mathcal K}}
\newcommand{\cN}{{\mathcal N}}
\newcommand{\cO}{{\mathcal O}}
\newcommand{\cT}{{\mathcal T}}
\author{Brendan Hassett and Yuri Tschinkel}
\title[Moving and ample cones]{Moving and ample cones of holomorphic symplectic fourfolds}
\begin{document}
\begin{abstract}
We analyze the ample and moving cones of holomorphic symplectic manifolds, in light
of recent advances in the minimal model program.  As an application, we establish a numerical 
criterion for ampleness of divisors on fourfolds deformation-equivalent to punctual Hilbert schemes of 
K3 surfaces.  
\end{abstract}
\date{\today}

\maketitle

\section{Introduction}
Let $F$ be an irreducible holomorphic symplectic variety.
The integral cohomology
group $H^2(F,\bZ)$ is equipped with a natural integral
quadratic form $\left(,\right)$, known as the Beauville-Bogomolov
form; the complex structure on $F$ induces a Hodge structure
on the complex cohomology group $H^2(F,\bC)$.  
The birational geometry of $F$ is tightly coupled to these 
cohomological invariants. 
For example, the ample cone of a polarized K3 surface $(F,g)$
is explicitly determined by these structures on $H^2(F,\bZ)$
and the class $[g]$ \cite[\S 2]{LP}.

In higher-dimensions, qualitative descriptions of the ample
cone have been obtained by Huybrechts \cite{Hu03}
and Boucksom \cite{Boucksom}.  However, these fall short of
the precise picture we have for K3 surfaces.  
The birational geometry of higher-dimensional varieties is
much richer than surfaces, and this is reflected in the
numerous invariants we can assign to these, e.g.,
the {\em moving cone} parametrizing effective divisors
without fixed components.  Again, we have a qualitative
description of this due to Huybrechts \cite{Hu03} (and
Theorem~\ref{theo:SIMD} below), but this is not sufficient
to determine whether a given divisor is moving.  

The last ten years have seen great advances in the birational
geometry of holomorphic symplectic varieties and the classification
of their birational contractions, especially in
the four-dimensional case \cite{WW} \cite{W} \cite{Nami} 
\cite{BHL} \cite{CMSB}.  This is therefore
a natural testing-ground for conjectures on the shape of the ample
and moving cones.  For fourfolds deformation-equivalent
to the punctual Hilbert scheme of a K3 surface, we formulated in 1999
precise conjectures characterizing the ample and moving cones \cite{HT01}.  

The recent progress in the log minimal model program
\cite{BCHM} lends new impetus to the efforts to resolve these problems.
Our principal result is Theorem~\ref{theo:main}, which gives one implication
of our conjecture, namely, that
the divisors claimed to be ample are in fact ample.
This entails a numerical classification of extremal rays, given in 
Theorem~\ref{theo:classifyray}.  We also prove general results about the
structure of ample and moving cones on irreducible holomorphic 
symplectic varieties 
of arbitrary dimension.

The paper is organized as follows: in Section~\ref{sect:generalities} 
we recall basic notation, constructions and conjectures 
relating to holomorphic symplectic fourfolds.  Section~\ref{sect:LogMMP}
outlines applications of the minimial model program to our situation.
In Section~\ref{sect:conj} we specialize to the four-dimensional
example mentioned above and recall the conjectures of \cite{HT01}.  
Section~\ref{sect:scientific} offers an analysis `from first
principles' of cohomology classes of extremal rays.   Finally, 
Section~\ref{sect:half} contains the proof of the main theorem.  

\

\noindent {\bf Acknowledgments:}  We are grateful to Daniel
Huybrechts and Dmitry Kaledin for valuable conversations.
The first author was partially supported by National Science Foundation Grants
0554491 and 0134259.
He appreciates the hospitality of the University of G\"ottingen.  
The second author was partially supported by National Science Foundation
Grants 0554280 and 0602333.

\section{Generalities on ample cones of holomorphic symplectic manifolds}
\label{sect:generalities}

Let $F$ be a projective irreducible holomorphic variety,
$\rN_1(F,\bZ)$ the group of one-cycles (up to numerical
equivalence), $\rN^1(F,\bZ)$ the group of divisor-classes,
$\NE_1(F)\subset \rN_1(F,\bR)$
the cone of effective curves, and $\oNE_1(F)$ its
closure.  The dual to $\oNE_1(F)$ in $\rN^1(F,\bR)$ is the {\em
nef cone} of $F$.  
Recall that
$\bR_{\ge 0} \varrho \subset \oNE_1(F)$ is an {\em extremal ray}
if whenever $\varrho=c_1C_1+c_2C_2$ for
$C_1,C_2 \in \oNE_1(F)$ and $c_1,c_2 > 0$ then 
$C_1,C_2 \in \bR_{\ge 0}\varrho$.  

Let $\left(,\right)$ denote the Beauville-Bogomolov form on
$H^2(F,\bZ)$, normalized so that it is integral but not divisible.
The induced $\bQ$-valued form on $H_2(F,\bZ)$ is also denoted
$\left(,\right)$.  Let $\cC_F$
denote the connected
component of the positive cone of $F$ 
$$\{ \alpha \in H^2(F,\bR) \cap H^{1,1}(F,\bC): \left(\alpha,\alpha\right)
>0 \}$$
containing the polarization.  

We recall some general facts:
\begin{itemize}
\item{The form $\left(,\right)$ has signature $(3,\dim H^2(F,\bR)-3)$ on $H^2(F,\bR)$
and signature $(1,\dim H^2(F,\bR)-3)$ on $H^2(F,\bR) \cap H^{1,1}(F,\bC)$ \cite[1.9]{Hu99}}.
\item{Fujiki \cite{Fujiki} \cite[1.11]{Hu99}
showed there exists a positive constant $c_0$ 
such that for each $\alpha \in H^2(F,\bR)$ 
$$\alpha^{\dim(F)}=c_0 \left(\alpha,\alpha\right)^{\dim(F)/2}.$$
More generally, for each Chern class $c_i(F)$ there exists
a constant $c_i$ such that
$$c_i(F)\alpha^{\dim(F)-i}=c_i \left(\alpha,\alpha\right)^{(\dim(F)-i)/2}.$$
}
\item{
Each divisor class $D$ with $\left(D,D\right)>0$ is big
\cite[3.10]{Hu99} \cite{Hu03b}}.
\item{There is an 
integral formula for the Beauville-Bogomolov form
\cite[\S 1.9]{Hu99} \cite{Bea85}. 
Choose $\sigma \neq 0 \in \Gamma(F,\Omega^2_F)$, normalized such that 
$$\int_F (\sigma \bar{\sigma})^{\dim(F)}=1.$$
Then there exists a positive real constant $c$ such that
\begin{equation} \label{eqn:integral}
\left(\alpha,\beta\right)=
		c \int_F \alpha \beta (\sigma \bar{\sigma})^{\dim(F)-1}
\end{equation}
for all $\alpha,\beta \in H^{1,1}(F,\bC)$.  }
\item{Let $D$ be a nef and big divisor class.
By Kawamata-Viehweg vanishing \cite[\S 8]{CKM}, $D$
has no higher cohomology.
By basepoint-freeness \cite[\S 9]{CKM}, $ND$ is globally
generated for some $N\gg 0$.}
\end{itemize}

Let 
$$\cK_F \subset \cC_F, \quad  \ocK_F\subset \ocC_F$$
denote the {\em
K\"ahler cone} of $F$ and its closure.  
The intersection $\cK_F \cap H^2(F,\bZ)$ (resp.
$\ocK_F \cap H^2(F,\bZ)$) is the set of ample (resp. nef)
divisors on $F$.

We recall the following result \cite[\S 5]{Hu99}:
\begin{theo} \label{theo:makeKahler}
Choose $\alpha \in \cC_F$ `very general', e.g., not orthogonal
to any integral class, cf. \cite[5.9]{Hu99}.  Then there
exists an irreducible holomorphic variety $F'$ and correspondence 
$\Gamma \subset F \times F'$ inducing a 
birational map $\phi:F' \dashrightarrow F$ such that 
\begin{itemize}
\item{$\Gamma_*:H^2(F,\bZ) \ra H^2(F',\bZ)$ is an isomorphism
respecting the Beauville-Bogomolov forms;}
\item{$\Gamma_* \alpha \in \cK_{F'}$.}
\end{itemize}
\end{theo}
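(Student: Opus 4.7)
The plan is to fix a marking $\eta\colon H^2(F,\bZ)\to\Lambda$ of $F$ by the appropriate lattice, and then to find a second marked irreducible holomorphic symplectic manifold $(F',\eta')$, lying in the same connected component $\mathfrak{M}^\circ$ of the moduli space of marked pairs of the given deformation type and sharing the period point of $(F,\eta)$, but with $(\eta')^{-1}\eta(\alpha)\in\cK_{F'}$. A birational Torelli theorem then upgrades the Hodge isometry $(\eta')^{-1}\circ\eta\colon H^2(F,\bZ)\to H^2(F',\bZ)$ to an actual birational map $\phi\colon F'\dra F$, whose graph closure $\Gamma\subset F\times F'$ realizes that isometry as $\Gamma_*$.

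The key ingredient, and the main obstacle, is a wall-chamber decomposition of $\cC_F$. Let $\mathcal{P}\colon\mathfrak{M}^\circ\to\Omega_\Lambda$ be the period map, which is a surjective local isomorphism. The assertion I would prove is that for each $p\in\Omega_\Lambda$, the union
$$\bigcup_{(F',\eta')\in\mathcal{P}^{-1}(p)}(\eta')^{-1}(\cK_{F'})\subset H^{1,1}(F,\bR)$$
(with the various $H^{1,1}$'s identified via the markings) covers $\cC_F$ except for a locally finite union of real hyperplanes $\delta^\perp$, where $\delta\in H^{1,1}(F,\bZ)$ has negative Beauville--Bogomolov square. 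I would establish this by combining three ingredients: twistor deformations, which produce a $\bP^1$-family of complex structures preserving a prescribed K\"ahler class as a class of type $(1,1)$; the openness of the K\"ahler cone in smooth families, together with surjectivity of the period map and Bogomolov--Tian--Todorov unobstructedness, which together show that every direction in $\cC_F$ is realized by a K\"ahler class on some nearby deformation; and a geometric analysis of the walls appearing in degenerations of K\"ahler cones in families, identifying them as orthogonal complements of classes of rational curves or prime exceptional divisors, and showing that crossing such a wall corresponds to passing to a different birational model in $\mathcal{P}^{-1}(p)$.

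With this decomposition in hand, the proof finishes quickly. The very generality hypothesis that $\alpha$ is orthogonal to no nonzero integral class rules out $\alpha$ lying on any wall $\delta^\perp$, so $\alpha$ belongs to $(\eta')^{-1}(\cK_{F'})$ for some $(F',\eta')\in\mathcal{P}^{-1}(\mathcal{P}(F,\eta))$. Since $(F,\eta)$ and $(F',\eta')$ share a period, the composition $(\eta')^{-1}\circ\eta$ is automatically a Hodge isometry of integral $H^2$ that sends $\alpha$ to a K\"ahler class. Huybrechts' birational Torelli theorem realizes it as an actual birational map $\phi\colon F'\dra F$, and the closure $\Gamma\subset F\times F'$ of its graph satisfies $\Gamma_*=(\eta')^{-1}\circ\eta$ and respects the Beauville--Bogomolov forms, hence $\Gamma_*\alpha\in\cK_{F'}$ as required.
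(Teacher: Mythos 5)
There is a genuine gap, in two places. First, the heart of your argument is the asserted wall-and-chamber statement: that the pulled-back K\"ahler cones of all marked manifolds in the fiber $\mathcal{P}^{-1}(p)$ cover $\cC_F$ away from a locally finite union of hyperplanes $\delta^{\perp}$. As written this is not proved but only sketched, and it is essentially equivalent to the theorem itself (for very general $\alpha$ it literally asserts the existence of the desired $(F',\eta')$); the third ``ingredient'' of your sketch, identifying the walls and showing that crossing one lands you in another member of $\mathcal{P}^{-1}(p)$, is exactly the hard content. Moreover, the step ``same period point in the same connected component $\Rightarrow$ birational, with the Hodge isometry $(\eta')^{-1}\circ\eta$ realized geometrically'' is a consequence of the Global Torelli theorem (Verbitsky--Markman), which is much stronger than, and historically posterior to, the statement being proved (Huybrechts' result in \cite[\S 5]{Hu99}); what was available is only that \emph{non-separated} marked pairs are birational, and your construction does not establish non-separatedness. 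So the proposal either rests on an unproved covering claim or smuggles in a theorem that subsumes the one at issue.

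Second, your last sentence is incorrect as stated: the closure of the graph of $\phi$ does not in general satisfy $\Gamma_*=(\eta')^{-1}\circ\eta$. This is precisely the point of the Example (Atiyah flop) and the Remark following the theorem in the paper: one has $\phi^*\alpha\neq\Gamma_*\alpha$ in general, and the correspondence realizing the isometry is the specialization $\Gamma=\mathrm{Graph}(\phi)+\sum_i Z_i$ of graphs of isomorphisms of nearby fibers, not the graph alone (in the K3 example the graph induces $\phi^*$, which differs from $\Gamma_*$ by the reflection in the $(-2)$-class, and need not send $\alpha$ into $\cK_{F'}$). For comparison, the argument behind the cited result avoids both issues: one deforms $F$ keeping $\alpha$ of type $(1,1)$ so that nearby fibers have trivial Picard group, where the K\"ahler cone equals the positive cone, hence $\alpha$ becomes K\"ahler on nearby fibers; $F'$ is then produced as a second limit of this family (non-separatedness is built in by construction), and $\Gamma$ is the limit of the graphs of the isomorphisms $F'_t\stackrel{\sim}{\ra}F_t$, which is what forces $\Gamma_*$ to be an isometry carrying $\alpha$ into $\ocK_{F'}$ (and into $\cK_{F'}$ for very general $\alpha$). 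Your outline can likely be repaired along these lines, but as written the key covering claim is unsubstantiated and the graph/correspondence identification is false.
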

The correspondence $\Gamma$ is the specialization of the
graph of an isomorphism $F'_t \stackrel{\sim}{\ra} F_t$,
where $F'_t$ and $F_t$ are fibers of small deformations 
$$\cF',\cF \ra \{t:|t|<1 \}$$
of $F'$ and $F$ respectively.  

\begin{exam}
The simplest nontrivial example is the Atiyah flop.
Let $F$ be a K3 surface
containing a $(-2)$-curve $E$ and 
$$\cF \ra \{t:|t|<1 \}$$
a general deformation of $F$, so the class $[E]$ does not
remain algebraic.  Let
$$\cF' \ra \{t:|t|<1 \}$$
denote the flop of $E$;  the fiber $F'$ over $t=0$ contains
a $(-2)$-curve $E'$.  
Note that in this case
$\phi:F'\stackrel{\sim}{\ra} F'$ but
$$\Gamma=\mathrm{Graph}(\phi)+E \times E' \subset F\times F'.$$
\end{exam}

\begin{rema}
From our example, it is evident that 
$$\phi^*\alpha\neq \Gamma_*\alpha$$
in general.  Equality holds iff
$$\Gamma=\mathrm{Graph}(\phi)+\sum_i Z_i$$
where each $Z_i$ maps to a codimension $\ge 2$ subvariety
in each factor.  
\end{rema}

Let
$$\ocBK_F \subset \ocC_F \subset H^2(F,\bR)\cap H^{1,1}(F,\bC)$$
denote the closure of the {\em birational 
K\"ahler cone} of $F$, i.e., the union $\cBK_F$ of the K\"ahler cones of
all holomorphic symplectic varieties birational to $F$.
This has the following numerical interpretation:
\begin{prop} \cite[4.2]{Hu03}
A class $\alpha \in \ocC_F$
lies in $\ocBK_F$ if and only if $\left(\alpha,D\right)\ge 0$
for each uniruled divisor $D \subset F$.  
\end{prop}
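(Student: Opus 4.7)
The two directions have rather different characters: $(\Rightarrow)$ is a direct positivity argument via the integral formula~\eqref{eqn:integral}, while $(\Leftarrow)$ requires the ``very general'' K\"ahlerization of Theorem~\ref{theo:makeKahler} together with an analysis of how walls in $\cC_F$ relate to uniruled divisors.

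For $(\Rightarrow)$, by continuity it suffices to treat $\alpha \in \cBK_F$, so that $\alpha$ corresponds to a K\"ahler class $\alpha'$ on some birational model $F'$. Since birational maps between irreducible holomorphic symplectic manifolds are isomorphisms outside a codimension-two subset, the strict transform $D'$ of a uniruled divisor $D \subset F$ is a well-defined effective divisor on $F'$, and the induced isomorphism $H^2(F,\bR) \cong H^2(F',\bR)$ preserves the Beauville--Bogomolov form, identifying $(\alpha, D)_F$ with $(\alpha', D')_{F'}$. The integral formula~\eqref{eqn:integral} then expresses this pairing as the integral over $D'$ of a pointwise non-negative top form---representing $\alpha'$ by a K\"ahler form makes the integrand a wedge of the K\"ahler form with a suitable power of $\sigma'\bar\sigma'$, all positive---so the pairing is $\geq 0$.

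For $(\Leftarrow)$, suppose $(\alpha, D) \geq 0$ for every uniruled $D$. I would perturb $\alpha$ slightly into $\cC_F$ so as to preserve the strict inequalities $(\alpha, D) > 0$ for all uniruled $D$---possible because such divisors comprise only countably many numerical classes---and simultaneously arrange that $\alpha$ is very general in the sense of Theorem~\ref{theo:makeKahler}. That theorem then produces a birational model $F'$ with a correspondence $\Gamma$ such that $\Gamma_* \alpha \in \cK_{F'}$. The main obstacle---and the heart of the proof---is to convert this conclusion into $\alpha \in \cBK_F$, which requires that $\Gamma$ be taken to be (the graph of) the induced birational map $\phi \colon F' \dashrightarrow F$ itself, rather than $\phi$ modified by extra codimension-two components $Z_i$ as in the Remark preceding the statement. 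Any such modification geometrically encodes a flop along, or divisorial contraction of, some uniruled divisor $D \subset F$ with $(\alpha, D) \leq 0$; the strict positivity hypothesis rules this out. Consequently $\alpha$ itself lies in $\cBK_F$, and the unperturbed class lies in $\ocBK_F$ by passing to the limit.
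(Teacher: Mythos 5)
First, note that the paper does not prove this proposition at all: it is quoted from Huybrechts \cite[4.2]{Hu03}, so there is no internal argument to compare yours against; your attempt has to stand on its own as a proof of Huybrechts' result. Your forward direction is fine in outline: since birational maps between holomorphic symplectic manifolds are isomorphisms in codimension one, the pairing $\left(\alpha,D\right)$ can be computed on the model $F'$ where $\alpha$ is (a limit of) K\"ahler classes, and the integral formula (\ref{eqn:integral}) plus a resolution of the strict transform (exactly as in the proof of Theorem~\ref{theo:SIMD}) gives nonnegativity.

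The backward direction, however, has a genuine gap precisely at the point you yourself flag as ``the heart of the proof.'' From Theorem~\ref{theo:makeKahler} you only get $\Gamma_*\alpha\in\cK_{F'}$ for a correspondence $\Gamma=\mathrm{Graph}(\phi)+\sum_i Z_i$, and to conclude $\alpha\in\cBK_F$ you must show that every $Z_i$ has image of codimension $\ge 2$ in both factors (so that $\Gamma_*=\phi^*$). Your justification --- that any divisorial component ``encodes a flop along, or divisorial contraction of, some uniruled divisor $D$ with $\left(\alpha,D\right)\le 0$'' --- is an assertion, not an argument, and it is essentially the entire content of Huybrechts' proposition. To make it a proof you would need (i) the nontrivial fact that the components of $\Gamma$ other than the graph which dominate a divisor in $F$ project onto \emph{uniruled} divisors (this comes from the degeneration construction behind Theorem~\ref{theo:makeKahler} and is proved by Huybrechts, not something one can read off from the Remark in the paper), and (ii) a quantitative comparison of $\Gamma_*\alpha$ with $\phi^*(\,\cdot\,)$, e.g.\ an identity expressing their difference in terms of the classes of these divisors with controlled signs, so that the hypothesis $\left(\alpha,D\right)>0$ actually forces the divisorial components to be absent. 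Neither ingredient is supplied, so the argument is circular at its crux. A secondary, fixable issue: your perturbation step imposes \emph{strict} inequalities against the (possibly infinitely many) classes of uniruled divisors simultaneously, while also making $\alpha$ very general; since a small generic perturbation can violate infinitely many strict inequalities that are not uniformly bounded below, you should first move $\alpha$ into the interior of the dual cone (e.g.\ by adding a small multiple of a K\"ahler class, which pairs positively and uniformly with effective divisor classes) before imposing the very-generality condition, and only then pass to the limit.
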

\begin{defi} An effective divisor $M$ on $F$ is {\em moving}
if there exists an integer $N>0$ such that $NM$ has no fixed components.
The {\em moving cone} of $F$ is the cone generated by
moving divisors.
\end{defi}
\begin{rema}\label{rema:BKinMOV} It is clear from the definition that 
elements of the birational K\"ahler cone $\cBK_F$ are contained in the 
moving cone.
\end{rema}

\begin{theo}[Symplectic interpretation of moving divisors] \label{theo:SIMD}
Each moving divisor is contained in the closure of
the birational K\"ahler cone $\ocBK_F$.  
Thus $\ocBK_F$ equals the closure of the moving cone of $F$.
\end{theo}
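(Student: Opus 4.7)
The plan is to combine Remark~\ref{rema:BKinMOV} with the numerical characterization of $\ocBK_F$ cited immediately above. The remark already supplies the inclusion of $\ocBK_F$ inside the closure of the moving cone; only the reverse inclusion requires work, and for this it suffices to show that every moving divisor $M$ satisfies (i) $M \in \ocC_F$ and (ii) $(M,D) \ge 0$ for each uniruled divisor $D \subset F$.

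For (i), the Fujiki relation $M^{\dim F} = c_0 (M,M)^{\dim(F)/2}$ reduces $(M,M) \ge 0$ to nonnegativity of the top self-intersection of $M$. Since $|NM|$ has no fixed components for some $N>0$, one can find generic representatives of $|NM|$ intersecting properly, and their top intersection is a nonnegative count, whence $(M,M) \ge 0$. That $M$ lies in the positive component is immediate from effectivity paired against an ample class.

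The core of the argument is (ii). Given a uniruled divisor $D$, pick a general rational curve $C$ in a covering family. Because $M$ is moving, there is a representative $M_N \in |NM|$ not containing $D$; the genericity of $C$ inside $D$ then ensures $C \not\subset M_N$, so $M \cdot C = (M_N \cdot C)/N \ge 0$. To translate this into the desired inequality $(M,D) \ge 0$, one invokes the identity
\[
\alpha \cdot C = \lambda_D\,(\alpha, D) \qquad \text{for all } \alpha \in H^2(F,\bR),
\]
with $\lambda_D > 0$; equivalently, the class of a general rational curve covering $D$ is a positive multiple of the Beauville-Bogomolov dual $[D]^\vee \in H_2(F,\bQ)$.

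The main obstacle is establishing this positive proportionality. A natural approach imitates the deformation strategy of Theorem~\ref{theo:makeKahler}: specialize the complex structure to a very general point of the Noether-Lefschetz locus along which $[D]$ remains of type $(1,1)$, so that the Picard lattice collapses to the rank-one sublattice generated by $[D]$. Since $C$ is rational, its class is forced to persist under such a deformation (all infinitesimal obstructions to deforming $C$ lie in $H^0(C, \Omega^1_F|_C)$ which pairs trivially with the variation), and proportionality of $[C]$ to $[D]^\vee$ then follows from rank considerations. The sign $\lambda_D > 0$ is pinned down by computing $C \cdot D = \lambda_D (D, D)$: for exceptional uniruled divisors, $(D,D) < 0$ and $C \cdot D < 0$, giving $\lambda_D > 0$; the boundary case $(D,D) = 0$ is treated by approximation from the interior of the positive cone.
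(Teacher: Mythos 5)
Your route is genuinely different from the paper's, but its core step is not proved. Everything hinges on the claim that a general member $C$ of a covering family of rational curves on a uniruled divisor $D$ satisfies $\alpha\cdot C=\lambda_D\left(\alpha,D\right)$ for all $\alpha$, with $\lambda_D>0$, i.e.\ that $[C]$ is a positive multiple of the Beauville--Bogomolov dual of $[D]$. This is a substantial theorem in its own right (it is essentially the proportionality statement for ruling curves of prime exceptional divisors), and your deformation sketch does not establish it: the whole difficulty is precisely whether the class of $C$ remains algebraic at a very general point of the Noether--Lefschetz locus of $[D]$, and the reason you offer --- that ``all infinitesimal obstructions to deforming $C$ lie in $H^0(C,\Omega^1_F|_C)$, which pairs trivially with the variation'' --- is not a correct obstruction computation (obstructions to deforming the curve sit in $H^1$ of its normal bundle, and nothing you say rules out the rational curves disappearing while $[D]$ stays of type $(1,1)$). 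Moreover, for uniruled $D$ with $\left(D,D\right)>0$ the proportionality claim is false in general (such a class need not remain uniruled along its Noether--Lefschetz locus), and your sign analysis only treats $\left(D,D\right)<0$ and $\left(D,D\right)=0$; the positive-square case has to be handled separately by the signature argument, which in turn requires your step (i). And step (i) as written also fails: for $\dim F=2n$ with $n$ even --- in particular for the fourfolds that are the paper's main concern --- the Fujiki relation reads $M^{2n}=c_0\left(M,M\right)^n$ with even exponent, so nonnegativity of the top self-intersection carries no information about the sign of $\left(M,M\right)$.

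For contrast, the paper avoids curves and dual classes entirely. It computes $\left(M,D\right)$ directly from the integral formula $\left(\alpha,\beta\right)=c\int_F\alpha\beta(\sigma\bar{\sigma})^{n-1}$, pulls back to a resolution $Z\ra F$ of the base locus of $|M|$ (after replacing $M$ by a multiple with no fixed component), writes $q^*M=\sum_i c_iE_i+p^*H$ with $H$ ample on the image of the linear system, kills the exceptional terms because $q(E_i)$ has codimension at least two, and bounds the remaining term by zero since $(\sigma\bar{\sigma})^{n-1}$ integrates nonnegatively over any subvariety. That argument is short and self-contained. If you want to keep your approach, you would have to import the proportionality theorem for prime exceptional divisors as a black box (and separately dispose of uniruled divisors of nonnegative square, together with a correct proof that $\left(M,M\right)\ge 0$, e.g.\ by pairing two general members of $|NM|$ via the same integral formula); as it stands, the proposal has a genuine gap at its central step.
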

\begin{rema}
Corollary~\ref{coro:converse} below is a partial converse to this result.
\end{rema}
\begin{proof}
We are grateful to Professor D. Huybrechts for his help with this
argument.  

Suppose that $M$ is moving.  To show that $M \in \ocBK_F$,
it suffices that $\left(M,D\right)\ge 0$ for each irreducible uniruled divisor
$D\subset F$.  We write $n=\dim(F)$.  

Replacing $M$ by a suitable multiple if necessary,
we may assume that $M$ has no fixed components,
i.e., its base locus has codimension
at least two.  There exists a diagram 
$$\begin{array}{ccc}
Z & \stackrel{p}{\ra} & F' \\
{\scriptstyle q}\downarrow \ & & \\
F & & 
\end{array}
$$
where $Z\ra F$ is a smooth projective resolution of the base locus
of $|M|$ and $p$ is the resulting morphism.  Thus there exists
an ample line divisor $H$ on $F'$ such that
$$q^*M=\sum_i c_i E_i + p^*H$$
where each $c_i\ge 0$ and $E_i$ is a $q$-exceptional divisor in $Z$.

Compute the Beauville-Bogomolov form by pulling back to $Z$:
$$\begin{array}{rcl}
\left(M,D\right) &=&c\int_F [M] [D] (\sigma \bar{\sigma})^{n-1} \\
	     	&=&c \int_Z q^*[M] q^*[D] q^*((\sigma \bar{\sigma})^{n-1})\\
		&=& c \int_Z (\sum_i c_i [E_i] + p^*[H])(q^*[D])
				q^*((\sigma \bar{\sigma})^{n-1}).
\end{array}
$$
First, note that
$$\int_Z [E_i] q^*[D] q^*((\sigma \bar{\sigma})^{n-1})=0.$$
Indeed, any degree-$(4n-2)$ form pulled back from $F$ integrates
to zero along $E_i$ because 
$\mathrm{codim}_{\bR}q(E_i)\ge 2$.  To evaluate the second term
$$\int_Z p^*[H] q^*[D] q^*((\sigma \bar{\sigma})^{n-1}),$$  
observe that the intersection $p^*[H] \cap q^*[D]$ involves a 
semiample divisor and an effective divisor.  In particular,
we can express
$$p^*[H] \cap q^*[D]=\sum_j n_j W_j, \quad n_j>0,$$
where each $W_j$ is a $(2n-2)$-dimensional subvariety of $Z$.  
Thus we have
$$\int_Z p^*[H] q^*[D] q^*((\sigma \bar{\sigma})^{n-1})=
\sum_j n_j \int_{W_j}q^*((\sigma \bar{\sigma}))^{n-1}.$$
Let $\tilde{W}_j\ra W_j$ denote a resolution of singularities 
and $r:\tilde{W}_j \ra F$ the induced morphism.  
We have
$$\int_{W_j}q^*((\sigma \bar{\sigma})^{n-1})=
\int_{\tilde{W}_j} (r^*\sigma \overline{r^*\sigma})^{n-1} \ge 0$$
because the integrand is a nonnegative multiple of the volume 
form on $\tilde{W}_j$. 
\end{proof}

We have the following result of Boucksom \cite{Boucksom} and Huybrechts \cite[\S 3]{Hu03}:
\begin{theo} \label{theo:Boucksom}
A class $\alpha \in \cC_F$ (resp. $\ocC_F$) 
is in $\cK_F$
(resp. $\ocK_F$) 
if and only if $\alpha.C>0$ (resp. $\alpha.C \ge 0$)
for each rational
curve $C\subset F$.
\end{theo}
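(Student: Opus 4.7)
The ``only if'' direction is immediate: an ample class has strictly positive intersection with every effective curve, and the analogous nef statement follows by continuity.

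For the converse, suppose $\alpha \in \cC_F$ satisfies $\alpha \cdot C > 0$ for every rational curve $C \subset F$; the goal is to conclude $\alpha \in \cK_F$. The plan is to proceed in two stages.

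First, I would show that $\alpha \in \ocBK_F$. By the proposition of Huybrechts quoted above, this reduces to checking $(\alpha, D) \geq 0$ for each uniruled prime divisor $D \subset F$. Each such $D$ is swept out by a covering family of rational curves $\{R_t\}$, and one verifies that the numerical class $[R_t] \in H_2(F,\bR)$ is a positive multiple of the image of $[D]$ under the isomorphism $H^2(F,\bR) \cong H_2(F,\bR)$ induced by the Beauville-Bogomolov form. The proportionality comes from a deformation/monodromy argument: deforming $F$ so as to preserve the Hodge class $[D]$ forces $[R_t]$ to remain algebraic (since the covering rational curves persist on the deforming divisor), and this pins down $[R_t]$ up to a positive scalar by nondegeneracy of the form. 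Granting this, $(\alpha, D)$ is a positive multiple of $\alpha \cdot R_t$, which is positive by hypothesis.

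Second, I would refine ``$\alpha \in \cBK_F$'' to ``$\alpha \in \cK_F$''. The birational K\"ahler cone decomposes as the union of K\"ahler cones of the marked birational holomorphic symplectic models of $F$, all identified with subcones of $H^2(F,\bR)$ via the canonical $H^2$-isomorphism of Theorem \ref{theo:makeKahler}. Adjacent K\"ahler chambers are separated by walls orthogonal to classes of rational curves that get flopped under the birational identification. The hypothesis $\alpha \cdot C > 0$ for every rational curve $C \subset F$ places $\alpha$ strictly on the $F$-side of every wall bordering $\cK_F$, forcing $\alpha \in \cK_F$. Taking closures yields the closed-cone version for $\ocK_F$.

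The main obstacle will be making the second stage rigorous. It requires the structure theory for birational contractions of holomorphic symplectic manifolds --- in particular, that exceptional loci of such contractions are covered by rational curves, and that every wall crossing in $\ocBK_F$ is realized by flopping a family of rational curves on one of the birational models. This structure is available thanks to the minimal model program advances in \cite{BCHM} together with the analysis of symplectic contractions outlined in Section \ref{sect:LogMMP}.
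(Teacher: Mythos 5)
First, a point of orientation: the paper does not prove this statement at all --- it is quoted as a theorem of Boucksom and Huybrechts with references to \cite{Boucksom} and \cite[\S 3]{Hu03}. So your attempt has to be judged against those proofs, and measured that way it has a genuine gap. The ``only if'' direction and your first stage are in reasonable shape (modulo the fact that the proportionality of the class of the covering rational curves with the Beauville--Bogomolov dual of $[D]$ is itself a nontrivial theorem, needed only when $\left(D,D\right)<0$, and your deformation sketch does not yet pin down the \emph{positivity} of the scalar). The real problem is your second stage, which is essentially circular: to conclude from $\alpha\in\ocBK_F\cap\cC_F$ and $\alpha.C>0$ for all rational curves $C\subset F$ that $\alpha\in\cK_F$, you need to know that every wall of $\cK_F$ inside $\cC_F$ is orthogonal to the class of a rational curve \emph{on $F$} --- equivalently, that the only obstructions to a positive $(1,1)$-class being K\"ahler are rational curves. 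That is precisely the content of the theorem, and the chamber/flop bookkeeping you invoke does not supply it.

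Concretely, the structure theory you appeal to (BCHM, contractions of symplectic fourfolds, Proposition~\ref{prop:chamber}) lives entirely in the algebraic part of the picture: it controls $\rN^1$ and the ample/movable cones of projective models, and it presupposes projectivity. The statement at hand is about the K\"ahler cone inside $H^{1,1}(F,\bR)$, where a priori the boundary could be cut out by non-algebraic conditions, by curves that are not rational, or by higher-dimensional subvarieties (positivity of $\int_Y\alpha^{\dim Y}$); nothing in your argument excludes these. The proofs of Boucksom and Huybrechts need a genuine analytic input at exactly this point, namely the Demailly--P\u{a}un numerical characterization of the K\"ahler cone of a compact K\"ahler manifold, combined with deformation (twistor-line) arguments showing that for irreducible holomorphic symplectic manifolds only uniruled divisors and rational curves can produce walls inside the positive cone. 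Without that ingredient (or an equivalent substitute), your second stage assumes what is to be proved. A secondary caution: the proposition you use in stage one is \cite[4.2]{Hu03}, i.e., it comes from the same circle of results, so if you intend this as an independent proof you should check it is not downstream of the theorem itself.
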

However, this does not imply, {\em a priori}, that these classes 
determine a locally-finite rational polyhedral cone, nor does it 
provide a geometric interpretation of these rational curves.

The Cone Theorem does shed some light on this. 
\begin{prop}[Cone Theorem for varieties with trivial canonical class] \cite[3.7]{KM}
Let $Y$ be a smooth projective variety with $K_Y=0$
and $\Delta$ an effective $\bQ$-divisor on $Y$ such that
$(Y,\Delta)$ has Kawamata log terminal singularities
(see \cite[2.13]{FA} for the definition.)  
Then the closed cone of effective curves $\oNE_1(Y)$ can be expressed
$$\oNE_1(Y)=\oNE_1(Y)_{\Delta.C\ge 0}+ \sum_j \bR_{\ge 0}[C_j], \quad
\Delta.C_j < 0$$
where the $C_j$ are extremal and represent rational curves collapsed 
by contractions of $Y$.  This is locally
finite in the following sense:
For an ample divisor $A$ and $\epsilon>0$, there are a finite number 
of $C_j$ with $C_j.(\Delta+\epsilon A)<0$.  
\end{prop}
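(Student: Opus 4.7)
The plan is to reduce this to the standard Cone Theorem for Kawamata log terminal (klt) pairs, as stated in \cite[3.7]{KM}, applied directly to the pair $(Y,\Delta)$. The key observation is that when $K_Y=0$, the log canonical divisor of the pair reduces to $K_Y+\Delta=\Delta$, so the usual $(K_Y+\Delta)$-negativity condition on extremal rays becomes the $\Delta$-negativity condition appearing in the statement. Applying the unadorned Cone Theorem to $Y$ alone would be useless, since the condition $K_Y.C\ge 0$ is automatic.

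First I would verify the hypotheses of the klt Cone Theorem: $(Y,\Delta)$ is klt by assumption, $Y$ is smooth projective (hence normal and $\bQ$-factorial), and $\Delta$ is an effective $\bQ$-divisor. Invoking \cite[3.7]{KM} for this pair yields
$$\oNE_1(Y) = \oNE_1(Y)_{(K_Y+\Delta).C\ge 0} + \sum_j \bR_{\ge 0}[C_j], \quad (K_Y+\Delta).C_j<0,$$
where each $C_j$ is rational (by Kawamata's bend-and-break argument embedded in the proof of the Cone Theorem) and is contracted by the extremal contraction associated to the ray $\bR_{\ge 0}[C_j]$ (Contraction Theorem for klt pairs). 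Substituting $K_Y=0$ gives the asserted decomposition.

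For local finiteness, I would invoke the corresponding clause of \cite[3.7]{KM}: for any ample divisor $A$ and any $\epsilon>0$, only finitely many extremal rays $\bR_{\ge 0}[C_j]$ satisfy $(K_Y+\Delta+\epsilon A).C_j<0$. Setting $K_Y=0$ translates this directly into the statement that only finitely many $C_j$ satisfy $(\Delta+\epsilon A).C_j<0$. Small $\epsilon$ preserves the klt hypothesis since ampleness provides effective $\bQ$-representatives of $\epsilon A$ avoiding the singularities of $\Delta$.

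In this sense there is no genuine obstacle: the proposition is a direct specialization of the standard klt Cone Theorem, packaged in the form convenient for the subsequent analysis of the ample and moving cones of holomorphic symplectic varieties, where $K_F=0$ forces one to work with an auxiliary boundary $\Delta$ to extract extremal rays.
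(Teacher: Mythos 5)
Your proposal is correct and matches the paper's intent exactly: the paper gives no separate proof but simply cites \cite[3.7]{KM}, and the statement is precisely the standard klt Cone Theorem for the pair $(Y,\Delta)$ with $K_Y=0$ substituted, including the local finiteness clause. Your remark about small $\epsilon$ preserving the klt hypothesis is unnecessary (the finiteness clause is purely numerical and needs no klt condition on $\Delta+\epsilon A$), but it does no harm.
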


Which parts of $\oNE_1(F)$ can be analyzed using this fact?

\begin{prop} \label{prop:polyhedralifmoving}
Let $M$ be a divisor class contained in $\ocK_F$ and in the interior of $\ocBK_F$.  
Then $\ocK_F$ is locally-finite rational polyhedral in a neighborhood
of $M$.
\end{prop}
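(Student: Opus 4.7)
The plan is to apply the Cone Theorem to a klt pair $(F,\Delta)$ engineered so that its output locates the finitely many extremal rays bounding $\ocK_F$ near $M$.

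Since $M$ is interior to $\ocBK_F\subset\ocC_F$, we have $(M,M)>0$. Because local polyhedrality is a neighborhood property, we may replace $M$ by a rational class $M_0\in \rN^1(F,\bQ)$ close to $M$ and still in the interior of $\ocBK_F$; it suffices to establish polyhedrality at $M_0$. Kodaira's lemma applied to the big class $M_0$ yields a decomposition $NM_0\sim A+E$ with $A$ an ample $\bQ$-divisor, $E$ an effective $\bQ$-divisor, and $N\in\bZ_{>0}$. For $\epsilon>0$ sufficiently small the pair $(F,\Delta):=(F,\epsilon E)$ is klt, and since $K_F=0$ the Cone Theorem gives
$$\oNE_1(F)=\oNE_1(F)_{\Delta\ge 0}+\sum_j\bR_{\ge 0}[C_j],\qquad \Delta\cdot C_j<0,$$
with local finiteness: for any $\epsilon'>0$, only finitely many $[C_j]$ satisfy $(\Delta+\epsilon'A)\cdot C_j<0$.

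The key computation is $(\Delta+\epsilon'A)\cdot C=\epsilon N M_0\cdot C-(\epsilon-\epsilon')A\cdot C$, so negativity is equivalent to $M_0\cdot C/A\cdot C<\eta:=(\epsilon-\epsilon')/(\epsilon N)$. Fix $0<\epsilon'<\epsilon$ and let $[C_1],\dots,[C_k]$ denote the resulting finite collection. Since $A$ is ample, the slice $S:=\oNE_1(F)\cap\{A\cdot C=1\}$ is compact and $M\cdot(-)$ is continuous on $S$, so by choosing $M_0$ close enough to $M$ we ensure that every extremal ray $[C]$ with $M\cdot C=0$, normalized to $S$, satisfies $M_0\cdot C<\eta$ and is therefore proportional to some $[C_i]$.

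It remains to check that, for a sufficiently small neighborhood $U$ of $M_0$,
$$\ocK_F\cap U=\{\alpha\in U:\alpha\cdot C_i\ge 0,\ i=1,\dots,k\}.$$
The inclusion $\subseteq$ is immediate. Conversely, suppose $\alpha\in U$ satisfies $\alpha\cdot C_i\ge 0$ for each $i$, and let $[C]\in S$ be any extremal ray. If $M_0\cdot C<\eta$, then $[C]$ is proportional to one of the $[C_i]$, so $\alpha\cdot C\ge 0$. Otherwise $M_0\cdot C\ge\eta$; by proximity $M\cdot C$ is then bounded below by a positive constant on this closed subset of $S$, and, again invoking compactness of $S$ to bound $|\alpha-M|\cdot C$ uniformly, $\alpha\cdot C\ge 0$ for $\alpha$ sufficiently close to $M$. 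Thus $\alpha\in\ocK_F$, and the rationality of each $[C_i]$ produces the desired rational polyhedral structure.

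The main obstacle is the alignment of quantifiers: the finite collection $[C_1],\dots,[C_k]$ from the Cone Theorem must capture every extremal ray annihilated by $M$, while uniformly controlling rays on which $M$ is strictly positive. This requires coordinated choices of the perturbation $M_0$, the parameters $\epsilon$ and $\epsilon'$, and the neighborhood $U$, all mediated through the compact normalized slice $S$.
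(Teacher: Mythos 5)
Your core mechanism is sound and is genuinely different from the paper's. You apply Kodaira's lemma to the big class to write $NM_0\sim A+E$, take $\Delta=\epsilon E$ (klt since $F$ is smooth and $K_F=0$), and read off the facets near $M$ from the finitely many $(\Delta+\epsilon'A)$-negative extremal rays of the Cone Theorem; the interiority of $M$ in $\ocBK_F$ enters only through $\left(M,M\right)>0$, i.e.\ bigness. The paper instead never decomposes $M$: it dualizes to an extremal ray $R$ with $R^{\perp}$ meeting the interior of $\ocBK_F$, uses Theorem~\ref{theo:SIMD} and Theorem~\ref{theo:makeKahler} to find a birational model $F'$ whose very ample divisor has proper transform $A$ on $F$ with $A\cdot R<0$, and then runs the Cone Theorem with $\Delta=\epsilon A$. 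Your route is more elementary and self-contained in that it avoids the birational K\"ahler machinery entirely.

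However, the rational-approximation step contains a genuine circularity that the written argument does not resolve. The threshold $\eta=(\epsilon-\epsilon')/(\epsilon N)$ is only determined \emph{after} $M_0$ is fixed: $N$ comes from Kodaira's lemma applied to $M_0$, and $\epsilon$ from the multiplicities of the resulting $E$; there is no a priori control of $N$ or $\epsilon$ as rational $M_0$ approaches $M$. So the requirement ``choose $M_0$ close enough to $M$ that every $M$-trivial extremal ray, normalized to $S$, satisfies $M_0\cdot C<\eta$'' quantifies over data depending on $M_0$ itself, and the same problem recurs in the converse inclusion, where ``by proximity $M\cdot C$ is bounded below'' and ``$\alpha$ sufficiently close to $M$'' need the perturbation to be small relative to $\eta$ (note also that $U$ was declared a neighborhood of $M_0$, not of $M$). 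Relatedly, ``it suffices to establish polyhedrality at $M_0$'' is not a valid reduction: polyhedrality near a nearby rational point says nothing at $M$ unless the neighborhood produced contains $M$, which is the same circular condition. The repair is easy and makes your proof correct: if $M$ is an integral (or rational) class, as ``divisor class'' is used in the paper, drop the perturbation altogether --- then any $M$-trivial ray has $(\Delta+\epsilon'A)\cdot C=-(\epsilon-\epsilon')A\cdot C<0$, so it lies among the finitely many $C_i$, while rays outside this finite set satisfy $M\cdot C\ge \eta$ on the compact slice, yielding the desired neighborhood of $M$; for a genuinely real class $M$, either apply the decomposition into ample plus effective directly to the big $\bR$-class (the Cone Theorem holds for klt $\bR$-pairs), or fall back on the paper's use of the interior-of-$\ocBK_F$ hypothesis to produce an effective divisor negative on the ray.
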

\begin{proof} There is nothing to prove unless $M$ lies on the boundary
of $\ocK_F$.

We translate the statement using the duality between curves and divisors:
Suppose that $R$ is an extremal ray of $\oNE_1(F)$ such that
$R^{\perp}$ meets the interior of $\ocBK_F$.  We claim that
$\oNE_1(F)$ is finite rational polyhedral in a neighborhood
of $R$.

Each $M$ in the interior of $\ocBK_F$ is contained in $\cC_F$, i.e.,
$\left(M,M\right) > 0$.  
Since $R$ is orthogonal to $M$ we have
$\left(R,R\right)<0$.  

By Theorem~\ref{theo:SIMD}, there exists 
an $\alpha \in \cBK_F$ with $\alpha R <0$.  We may assume
$\alpha$ is `general' in the sense of 
Theorem~\ref{theo:makeKahler}.  Let
$F'$ be the hyperk\"ahler manifold birational to $F$
with K\"ahler class $\alpha$.
Suppose that $A'$ is a
very ample divisor of $F'$ and let
$A$ be its proper transform in $F$.  We have $A R<0$ as well.  

Choose $\epsilon >0 \in \bQ$
such that $\epsilon A$ is Kawamata log terminal.  Indeed,
since $F$ is smooth if we choose
$\epsilon$ such that
$$1/\epsilon > \max_{x \in F} \{ \mathrm{mult}_x(A) \}$$
then the singularities are Kawamata log terminal by \cite[8.10]{KSoP}. 
Then the Cone Theorem implies that the effective cone
is finite polyhedral in some neighborhood of $R$.
\end{proof}

\section{Application of the log minimal model program}
\label{sect:LogMMP}

We will use the following consequence of the log minimal
model program:
\begin{theo} \label{theo:fingen}
Let $Y$ be a smooth projective variety with $K_Y$ trivial.
Suppose that $D_1,\ldots,D_r$ are big divisors on $Y$.
Then the ring
$$\oplus_{(n_1,\ldots,n_r)\in \bZ_{\ge 0}^r} 
\Gamma(F,\cO_F(n_1D_1+\ldots+n_rD_r))$$
is finitely generated.
\end{theo}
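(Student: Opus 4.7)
The plan is to realize the ring in question as a Veronese subring of a multigraded adjoint ring of a collection of klt pairs, so that the finite generation theorem of \cite{BCHM} applies.

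First, I reduce to klt boundaries. By Kodaira's lemma each big divisor $D_i$ is $\bQ$-linearly equivalent to $A_i+E_i$ with $A_i$ ample and $E_i\ge 0$; after passing to a $\bQ$-linearly equivalent effective divisor (which does not change sections up to canonical identification) I may assume each $D_i$ itself is effective. Since $Y$ is smooth, I can choose positive rationals $\epsilon_i$ so small that the coefficients of $\epsilon_i D_i$ along the components of its support are all $<1$; then each pair $(Y,\epsilon_i D_i)$ is Kawamata log terminal by \cite[8.10]{KSoP}. Since $K_Y\equiv 0$ we have $K_Y+\epsilon_i D_i\sim_\bQ \epsilon_i D_i$, so each of these adjoint classes is big and klt.

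Second, I invoke the main finite generation theorem of the log minimal model program. In its multi-divisor formulation (\cite{BCHM}, Corollary~1.1.9 or the version for adjoint rings associated to several klt pairs), it asserts that
$$R\,:=\,\bigoplus_{(n_1,\ldots,n_r)\in\bN^r} \Gamma\bigl(Y,\lfloor n_1\epsilon_1 D_1+\ldots+n_r\epsilon_r D_r\rfloor\bigr)$$
is a finitely generated $\bC$-algebra, since each summand $K_Y+\epsilon_i D_i$ is klt and big.

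Third, I descend from $R$ to the ring in the statement by a Veronese-subring argument. Choose a common denominator $N$ with $N\epsilon_i\in\bN$ and set $b_i=N/\epsilon_i\in\bN$. For multi-indices of the form $(n_1,\ldots,n_r)=(k_1b_1,\ldots,k_rb_r)$ the real number $n_i\epsilon_i=k_i$ is an integer, so the floor is trivial and
$$\bigoplus_{\vec k\in\bN^r}\Gamma\bigl(Y,k_1 D_1+\ldots+k_r D_r\bigr)$$
is identified with the sub-$\bC$-algebra $R'\subset R$ spanned by the graded pieces lying in the sub-semigroup $M=\prod_i b_i\bN\subset\bN^r$. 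Some power of each algebra generator of $R$ has multidegree in $M$, so $R$ is a finitely generated $R'$-module; the Artin--Tate lemma then yields that $R'$ is a finitely generated $\bC$-algebra, which is exactly the claim.

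The main obstacle I anticipate is locating and applying the correct multi-graded version of the BCHM finite generation theorem: while the single-pair case is exactly \cite{BCHM}, the multi-divisor statement used in the second step requires either quoting the simultaneous version or running a short inductive/rescaling argument over rational points of the cone $\sum\bQ_{\ge 0}\epsilon_i D_i$. Once finite generation is obtained on any rational sub-chamber of the big cone, the Veronese manipulation in the third step is routine.
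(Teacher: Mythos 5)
Your argument is essentially the paper's own proof: rescale the $D_i$ by a small rational multiple so that the resulting boundaries are log terminal on the smooth variety $Y$ (via the multiplicity bound of \cite[8.10]{KSoP}), apply the multigraded finite generation statement \cite[1.1.9]{BCHM}, and recover the stated ring by restricting to the multidegrees where the floors are trivial --- a restriction the paper asserts without comment and you justify via the Artin--Tate lemma. Only cosmetic repairs are needed (take $b_i$ to be the denominator of $\epsilon_i$ rather than $N/\epsilon_i$, and note that $\bQ$-linear equivalence identifies graded pieces only along a sublattice, so the effectivity reduction should be phrased with an integral multiple of $D_i$), none of which affects the strategy, which coincides with the paper's.
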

\begin{proof}
There exists a positive $\epsilon \in \bQ$ such that each 
$\epsilon D_i$ has divisorial log terminal singularities
(see \cite[2.13]{FA} for the definition.)   Indeed,
if we choose $\epsilon$ such that
$$1/\epsilon > \max_{y \in Y,i=1,\ldots,r} \{ \mathrm{mult}_y(D_i,y) \}$$
then \cite[8.10]{KSoP} guarantees the singularities have the desired property.
It follows from \cite[1.1.9]{BCHM} that the graded ring
$$\oplus_{(m_1,\ldots,m_r) \in \bZ_{\ge 0}^r}
\Gamma(F,\cO_F(\lfloor \sum_i m_i \epsilon D_i\rfloor))
$$
is finitely generated.  It remains finitely generated
when we restrict to the multidegrees such that
each $m_i\epsilon \in \bZ$.  
\end{proof}

\begin{prop} \label{prop:chamber}
Let $F$ be a projective irreducible holomorphic symplectic manifold.  
Consider the chamber decomposition
$$\cup_{F'} \cK_{F'} \subset \ocBK_F$$
where the union is taken over holomorphic symplectic birational models of $F$.
This is locally finite polyhedral near divisors $M \in \cC_F$, 
i.e., given a small
neighborhood $U \ni M$, the cone $\ocBK_F \cap U$ is defined in $U$ by 
a finite number of rational linear inequalities, with
chambers equal to the complement to a finite number of rational
hyperplanes. 
\end{prop}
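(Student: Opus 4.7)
The plan is to leverage Theorem~\ref{theo:fingen} to translate the chamber-decomposition claim into a finite-generation statement, then invoke the standard dictionary between finitely generated multi-section rings and Mori chamber decompositions. Fix $M \in \cC_F$ and a small open neighborhood $U$ of $M$ contained in $\cC_F$; every class in $\cC_F$ has positive Beauville--Bogomolov square and is therefore big. First I would choose finitely many rational classes $D_1, \ldots, D_r \in U$ spanning a full-dimensional rational polyhedral cone $\Sigma \subset U$ that contains $M$ in its relative interior. Theorem~\ref{theo:fingen} then asserts that the $\bZ_{\ge 0}^r$-graded section ring
$$R := \bigoplus_{(n_1, \ldots, n_r) \in \bZ_{\ge 0}^r} \Gamma\bigl(F, \cO_F(n_1 D_1 + \cdots + n_r D_r)\bigr)$$
is finitely generated.

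Next I would invoke the Hu--Keel-type formalism (made unconditional for varieties with trivial canonical class via \cite{BCHM}): finite generation of $R$ implies that $\Sigma$, intersected with the cone of effective divisors of $F$, admits a decomposition into finitely many rational polyhedral chambers, each being the pullback of the nef cone of a small $\bQ$-factorial modification $F' \dra F$, with walls cut out by a finite collection of rational hyperplanes. Restricting to the interior of $\ocBK_F$ leaves only the SQMs obtained by sequences of flops; since $K_F = 0$, work of Huybrechts and Kaledin guarantees that flops of smooth irreducible holomorphic symplectic manifolds are again smooth irreducible holomorphic symplectic, and the associated chamber coincides with $\cK_{F'}$. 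Conversely, any holomorphic symplectic birational model $F'$ whose K\"ahler cone meets the interior of $\Sigma$ appears this way: pick a rational class $\alpha \in \cK_{F'} \cap \Sigma$ and apply Theorem~\ref{theo:makeKahler} to identify $F'$ with the corresponding SQM.

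The principal obstacle I expect is this last geometric identification --- matching the abstract SQMs output by the MMP with the genuine hyperk\"ahler birational models indexing $\bigcup_{F'} \cK_{F'}$, and ensuring that every such $F'$ with K\"ahler cone meeting $\Sigma$ is accounted for. Once this bijection is in place, the local finiteness and rationality of the chamber decomposition of $\ocBK_F$ near $M$ follow directly from the finiteness and rationality of the Mori decomposition of $\Sigma$.
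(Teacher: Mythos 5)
Your proposal follows essentially the same route as the paper's proof: place $M$ inside a full-dimensional rational polyhedral cone spanned by big divisors, apply Theorem~\ref{theo:fingen} to get finite generation of the multigraded section ring, and invoke the Hu--Keel dictionary to obtain a finite rational chamber decomposition whose chambers are nef cones of small modifications, identified with the K\"ahler cones of holomorphic symplectic birational models. The only difference is cosmetic, in the final identification step: the paper cites Theorem~\ref{theo:SIMD} to see that the small modifications are again projective irreducible holomorphic symplectic manifolds, whereas you appeal to smoothness of flops and Theorem~\ref{theo:makeKahler}, which serves the same purpose.
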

In particular, $\ocK_F \cap \cC_F$ is locally-finite 
rational polyhedral at each divisor
$M \in  \ocK_F \cap \cC_F$.  
\begin{proof}
Recall that divisors $M$ with $\left(M,M\right)>0$ are big.  Since being 
big is an open condition in $\rN^1(F,\bR)$, 
we can express $M$ as an element of the convex hull of a collection of
big divisors $D_1,\ldots,D_r$ which freely generate $\rN^1(F,\bZ)$.  
Consider the intersection of $\ocBK_F$ with the cone
$$\left<D_1,\ldots,D_r\right>;$$
it suffices to verify that this is locally finite polyhedral.

The graded ring associated to these divisors 
$$R(D_1,\ldots,D_r):=\oplus_{(n_1,\ldots,n_r)\in \bZ_{\ge 0}^r} 
\Gamma(F,\cO_F(n_1D_1+\ldots+n_rD_r))$$
is finitely generated by
Theorem~\ref{theo:fingen}.
As discussed in \cite[2.9]{HuKeel}, this finite generation has 
implications for the birational geometry of $F$:  
\begin{itemize} 
\item{the subcone
$$\ocK_F \cap \left<D_1,\ldots,D_r\right> \subset \left<D_1,\ldots,D_r\right>$$
is determined by a finite number of linear rational inequalities;}
\item{the moving divisors in $\left<D_1,\ldots,D_r\right>$ are the union
of the rational polyhedral cones
$$\cup_{F'} \ocK_{F'} \cap \left<D_1,\ldots,D_r \right>$$
where each $F \stackrel{\sim}{\dashrightarrow}F'$ is a small birational modification.}
\end{itemize}
Indeed, the chamber decompositions of $\left<D_1,\ldots,D_r\right>$
are governed by the various Geometric Invariant Theory quotients of 
$R(D_1,\ldots,D_r)$ under the $\bG_m^r$-action associated
with the multigrading.  We consider linearizations of the action 
corresponding to positive characters of $\bG_m^r$.

Theorem~\ref{theo:SIMD} implies each 
$F'$ is also a projective irreducible holomorphic symplectic manifold,
which completes the proof.
\end{proof}

\begin{coro} \label{coro:curvepolyhedral}
Let $F$ be a projective irreducible holomorphic symplectic manifold.  Then the intersection
$$\oNE_1(F) \cap \{R \in H_2(F,\bR): \left(R,R\right)<0 \}$$
is locally-finite rational polyhedral.  
\end{coro}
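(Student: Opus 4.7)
The plan is to reduce to the Cone Theorem argument carried out in the proof of Proposition~\ref{prop:polyhedralifmoving}. Given $R \in \oNE_1(F)$ with $(R,R)<0$, I split into a dichotomy on uniruled divisors: either \emph{(A)} $R \cdot D \geq 0$ for every uniruled divisor $D \subset F$, or \emph{(B)} there exists a uniruled divisor $D_0 \subset F$ with $R \cdot D_0 < 0$.

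In Case (B), I pick $\epsilon \in \bQ_{>0}$ small enough that $\epsilon D_0$ is Kawamata log terminal (as in the proof of Proposition~\ref{prop:polyhedralifmoving}, using $\epsilon < 1/\max_{x \in F} \mathrm{mult}_x D_0$ and \cite[8.10]{KSoP}). Then $\epsilon D_0 \cdot R < 0$, and the Cone Theorem for the klt pair $(F, \epsilon D_0)$ writes $\oNE_1(F) = \oNE_1(F)_{\epsilon D_0 \cdot C \geq 0} + \sum_j \bR_{\geq 0}[C_j]$ with locally finitely many $C_j$. A short continuity argument shows any extremal ray of $\oNE_1(F)$ in a sufficiently small neighborhood of $R$ must be proportional to one of these $C_j$, giving local finite rational polyhedrality near $R$.

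In Case (A), I construct by hand a class $M \in R^\perp \cap \mathrm{int}(\ocBK_F)$. Fix $\beta \in \cK_F$ ample and set
\[ M := \beta - \frac{\beta \cdot R}{(R, R)}\, R, \]
using the natural pairing between $H_2(F,\bR)$ and $H^{1,1}(F,\bR)$. A direct computation yields $M \cdot R = 0$, $(M,M) = (\beta,\beta) - (\beta\cdot R)^2/(R,R) > 0$ (using $(R,R)<0$), and $(M,\beta) > 0$; thus $M \in R^\perp \cap \cC_F$. For any uniruled divisor $D$, writing $c := \beta \cdot R/(R,R) < 0$,
\[ (M, D) = \beta \cdot D + |c|(R \cdot D) \geq \beta \cdot D > 0, \]
where strict positivity uses Case (A)'s hypothesis and $\beta$ ample. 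By Huybrechts' description of $\ocBK_F$ (cited just before Theorem~\ref{theo:SIMD}), this places $M$ in the interior of $\ocBK_F$. The proof of Proposition~\ref{prop:polyhedralifmoving} then applies verbatim to this $M$: perturb slightly to $\alpha \in \cBK_F$ with $\alpha \cdot R < 0$, invoke Theorem~\ref{theo:makeKahler} to realize $\alpha$ as K\"ahler on a birational hyperk\"ahler model $F'$, take $A$ the proper transform in $F$ of a very ample divisor on $F'$ (so $A \cdot R < 0$), choose $\epsilon > 0$ with $\epsilon A$ klt, and apply the Cone Theorem for $(F, \epsilon A)$.

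The main obstacle is the construction in Case (A): the Gram-Schmidt-type projection $M$ of $\beta$ onto $R^\perp$ a priori only lies in $\cC_F$, and the strict inequality $(M,D) > 0$ for \emph{every} uniruled $D$ is precisely what Case (A)'s hypothesis delivers. Case (B) needs a separate treatment because there the construction may fail to land in the interior of $\ocBK_F$ (a uniruled $D_0$ proportional to $R$ under Beauville-Bogomolov would force $(M,D_0)=0$); fortunately the uniruled divisor $D_0$ furnishing Case (B) directly produces a klt pair to which the Cone Theorem applies without passing through any birational model.
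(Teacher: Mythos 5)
Your route is genuinely different from the paper's: the paper proves this corollary in one line, by dualizing (supporting nef classes at such $R$ have positive Beauville--Bogomolov square) and invoking Proposition~\ref{prop:chamber}, which rests on the finite generation Theorem~\ref{theo:fingen} from the log MMP; you instead try to get by with the Cone Theorem alone, in the pattern of Proposition~\ref{prop:polyhedralifmoving}, after a dichotomy on uniruled divisors. Your Case (B) is fine --- it is the same klt-pair/Cone Theorem mechanism the paper uses in Proposition~\ref{prop:polyhedralifmoving}. The gap is in Case (A), at the step ``this places $M$ in the interior of $\ocBK_F$.'' Huybrechts' criterion, as cited before Theorem~\ref{theo:SIMD}, characterizes membership in the \emph{closed} cone $\ocBK_F$; strict inequalities $\left(M,D\right)>0$ against the (possibly infinite) family of uniruled divisors do not yield interiority, since a priori there could be uniruled classes $D_i$ with $\left(M,D_i\right)/\lVert D_i\rVert \to 0$, i.e.\ walls $D_i^{\perp}$ accumulating at $M$ (the limit is a pseudoeffective class of negative square orthogonal to $M$, which nothing in your argument excludes). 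Ruling this out is precisely the local finiteness of the wall structure of $\ocBK_F$ inside $\cC_F$, which the paper extracts from Proposition~\ref{prop:chamber}; it is not a consequence of the membership criterion. The point is not cosmetic: your next step perturbs $M$ into $\cBK_F$ on the side $\{\alpha\cdot R<0\}$, which needs a full neighborhood of $M$ inside $\ocBK_F$; if $M$ lies on the boundary, you get no $\alpha\in\cBK_F$ with $\alpha\cdot R<0$, hence no effective divisor $A$ with $A\cdot R<0$ to feed into the Cone Theorem. (Minor: the strict positivity $\left(\beta,D\right)>0$ for $\beta$ ample and $D$ effective also deserves a line via the integral formula, as in the proof of Theorem~\ref{theo:SIMD}.)

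If you close the gap by citing Proposition~\ref{prop:chamber}, your case division becomes superfluous and the argument collapses to the paper's proof. Alternatively, you can avoid interiority altogether and dispense with the dichotomy: let $\rho\in\rN^1(F,\bQ)$ be the Beauville--Bogomolov dual of $R$ and set $A_t=tg+\rho$; then $A_t\cdot R=t(g\cdot R)+\left(R,R\right)<0$ for rational $t$ slightly below $t_0=-\left(R,R\right)/(g\cdot R)$, while $\left(A_{t_0},A_{t_0}\right)=t_0^2\left(g,g\right)-\left(R,R\right)>0$ and $\left(A_t,g\right)>0$, so such $A_t$ has positive square in $\cC_F$, hence is big, hence an integral multiple is effective and negative on $R$; your Case (B) klt/Cone Theorem argument then applies verbatim near $R$.
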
 
\begin{proof}
As in the proof of Proposition~\ref{prop:polyhedralifmoving}, supporting 
hyperplanes to $\oNE_1(F)$ in the region 
$$\{R: \left(R,R\right) < 0 \}$$
correspond to divisor classes $M$ with $\left(M,M\right)>0$, and Proposition~\ref{prop:chamber}
applies.  
\end{proof}

\begin{coro} \label{coro:converse}
Let $F$ be a projective irreducible holomorphic symplectic manifold.  
Each divisor $M \in \ocBK_F \cap \cC_F$ is moving.  
\end{coro}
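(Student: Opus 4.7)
The plan is to pass $M$ to a birational holomorphic-symplectic model on which it becomes nef and big, invoke basepoint-freeness there, and then descend sections back to $F$ using the fact that birational maps between irreducible holomorphic symplectic manifolds are isomorphisms in codimension one. Since $M \in \cC_F$ we have $(M,M)>0$, so Proposition~\ref{prop:chamber} applies: the chamber decomposition $\cup_{F'}\cK_{F'}$ of $\ocBK_F$ is locally finite rational polyhedral near $M$, so $M$ lies in the closure $\ocK_{F'}$ of some chamber, where $\phi\colon F \dashrightarrow F'$ is a small birational map to another projective irreducible holomorphic symplectic manifold.

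Under the Beauville-Bogomolov lattice isomorphism induced by $\phi$, the class $M$ transports to a class $M' \in \ocK_{F'}$ with $(M',M') = (M,M) > 0$, so $M'$ is nef and big on $F'$. By the generalities recalled in Section~\ref{sect:generalities}, Kawamata-Viehweg vanishing and the basepoint-freeness theorem produce an integer $N>0$ such that $NM'$ is globally generated on $F'$. To transfer this back to $F$, observe that $K_F = K_{F'} = 0$ forces $\phi$ to be an isomorphism in codimension one: in any common resolution $p\colon Z \to F$, $q\colon Z \to F'$, the exceptional divisors must coincide, since $K_Z - p^*K_F = E_p$ and $K_Z - q^*K_{F'} = E_q$ give $E_p = E_q$. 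The proper transform of $|NM'|$ under $\phi$ is then a linear subsystem of $|NM|$ on $F$ whose base locus is contained in the indeterminacy locus of $\phi^{-1}$, which has codimension at least two in $F$. Hence $NM$ has no fixed components, i.e.\ $M$ is moving.

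The delicate point I expect to be the main obstacle is the last step: one must check that the proper transform of $|NM'|$ indeed lives in the class $NM$ rather than being twisted by exceptional divisors, and that sections extend uniquely across the codimension-two exceptional locus on the smooth variety $F$. Both facts are standard given the `small' property of $\phi$ recorded in Proposition~\ref{prop:chamber}, combined with Hartogs-type extension on smooth varieties, but this is where the full force of the small-modification picture is used; it is also what forbids any stronger statement dropping the hypothesis $M \in \cC_F$, since for boundary classes with $(M,M)=0$ one cannot guarantee bigness of $M'$ on the birational model.
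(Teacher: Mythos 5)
Your proposal is correct and follows essentially the same route as the paper: invoke Proposition~\ref{prop:chamber} to realize $M$ as a nef and big class $M'$ on a small birational model $F'$, apply basepoint-freeness there, and transfer sections back through the codimension-one isomorphism. The extra details you supply (the crepancy argument showing the map is small, and the Hartogs-type transfer of the linear system) are correct but already packaged in the paper's citation of Proposition~\ref{prop:chamber} and the smallness of the modification.
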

\begin{proof}
Proposition~\ref{prop:chamber} implies that $M$ corresponds to a nef
and big divisor $M'$ on some small birational modification
$F\stackrel{\sim}{\dashrightarrow} F'$, where 
$F'$ is a projective irreducible holomorphic symplectic manifold.
Thus basepoint-freeness implies that some multiple of $M'$ is basepoint
free.  Since $F$ and $F'$ are isomorphic in codimension one, we conclude
that $M$ is moving on $F$.
\end{proof}

\begin{rema}
This analysis only applies to divisor classes with {\em positive} 
Beauville-Bogomolov form.  The case where the form is zero is remains
open (cf. Conjecture~\ref{conj:abelian}).
\end{rema}

\section{Conjectures for four-dimensional manifolds}
\label{sect:conj}
In this section, we recall a conjecture on the ample cones
of polarized varieties $(F,g)$ deformation equivalent to $S^{[2]}$,
the Hilbert scheme of length-two subschemes on a K3 surface
$S$.  This was first formulated in \cite{HT01}.  

Denote by
$$
\rN^1_+(F,g) = \{v\in \rN^1(F,\bZ)\,|\, \left(v,g\right) >0\}
$$
the positive halfspace (with respect to $g$ and the Beauville-Bogomolov form).
Let $E$ be the set of classes
$\rho \in \rN^1_+(F,g)$
satisfying one of the following:
\begin{enumerate}
\item $\left(\rho,\rho  \right)=-2$ and $\left(\rho,H^2(F,\bZ)\right)=2\bZ$,
\item $\left(\rho,\rho  \right)=-2$ and $\left(\rho, H^2(F,\bZ)\right)=\bZ$,
\item $\left(\rho,\rho  \right)=-10$ and $\left(\rho,H^2(F,\bZ)\right)=2\bZ$,
\end{enumerate}
Let $E^*$ be the corresponding classes $R\in H_2(F,{\bZ})$, i.e.,
for some $\rho \in E$ we have
$$\left( v,\rho \right)=\begin{cases}
                        R.v & \text{ where } \left(\rho,H^2(F,\bZ)\right)={\bZ} \\
                       2R.v & \text{ where } \left(\rho,H^2(F,\bZ) \right)=2{\bZ}
                       \end{cases}
$$
for each $v\in H^2(F,\bZ)$.  In particular, $R$ satisfies one of the following
\begin{enumerate}
\item $\left(R,R  \right)=-\frac{1}{2}$,
\item $\left(R,R  \right)=-2$,
\item $\left(R,R  \right)=-\frac{5}{2}$.
\end{enumerate}
Let $\rN_E(F,g)\subset H_2(F,\bR)$ be the
smallest real cone containing $E^*$ and the elements  $R\in \rN_1(F,\bZ)$
such that $R.g>0$ and the corresponding $\rho$ has nonnegative
square.

\begin{conj}[Effective curves conjecture]
\label{conj:main}
$$\NE_1(F)=\rN_E(F,g).$$
\end{conj}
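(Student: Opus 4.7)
My plan is to prove the two inclusions $\rN_E(F,g)\subseteq \NE_1(F)$ and $\NE_1(F)\subseteq \rN_E(F,g)$ separately. As a preliminary reduction, observe that the part of $\rN_E(F,g)$ generated by classes $R$ with $(R,R)\ge 0$ and $R.g>0$ already lies in $\oNE_1(F)$: the dual divisors have positive Beauville--Bogomolov square and so are big by \cite[3.10]{Hu99}, while the nonnegative-square boundary follows by continuity. Conversely, by Corollary~\ref{coro:curvepolyhedral} the cone $\oNE_1(F)$ is locally finite rational polyhedral in the region $\{(R,R)<0\}$, so in each direction only finitely many extremal rays in a compact slice need to be analysed. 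Throughout I therefore focus on the classes of negative Beauville--Bogomolov square.

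For the inclusion $\rN_E(F,g)\subseteq \NE_1(F)$, the goal is to realize each class $R\in E^*$ as an explicit rational curve by exploiting the deformation equivalence of $F$ with a punctual Hilbert scheme $S^{[2]}$. On $S^{[2]}$ the three numerical types admit natural geometric incarnations: a line inside a Lagrangian $\bP^2\subset S^{[2]}$ gives the class with $(R,R)=-\tfrac{1}{2}$; a fibre of the Hilbert--Chow morphism, and a ruling of a $\bP^2$-bundle arising from a Mukai-type flop, give the two $(R,R)=-2$ classes distinguished by divisibility; and a ruling of the uniruled divisor produced by a divisorial contraction provides the $(R,R)=-\tfrac{5}{2}$ class. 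Once these distinguished classes are exhibited on a convenient explicit $S^{[2]}$, I would transport them to $(F,g)$ along a path in the moduli of polarized deformations, using the parallel transport formalism underlying Theorem~\ref{theo:makeKahler} to identify the Hodge classes; since $g$ is held fixed along the family, the curves---together with their ambient Lagrangian planes, $\bP^2$-bundles, or uniruled divisors---persist as algebraic cycles in nearby fibres.

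For the reverse inclusion $\NE_1(F)\subseteq \rN_E(F,g)$, the strategy is to classify every extremal ray $R$ of $\oNE_1(F)$ with $(R,R)<0$. Mimicking the proof of Proposition~\ref{prop:polyhedralifmoving}, one perturbs by a small Kawamata log terminal pair $(F,\epsilon A)$ to produce an extremal $K_F+\epsilon A$-negative contraction collapsing $R$. I would then appeal to the classification of birational contractions of projective holomorphic symplectic fourfolds (\cite{WW}, \cite{W}, \cite{Nami}, \cite{BHL}, \cite{CMSB}) to restrict the exceptional locus to a short list of archetypes: a Lagrangian $\bP^2$, a $\bP^2$-bundle over a K3 surface, a family of nodal Lagrangian surfaces over a K3, or a divisorial contraction of a uniruled divisor. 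For each archetype a Fujiki-type intersection calculation using the constants $c_0$ and $c_i$ from Section~\ref{sect:generalities} pins down the square $(\rho,\rho)$ and the divisibility of the dual class $\rho\in H^2(F,\bZ)$, matching exactly one of the three cases in the definition of $E$.

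The hard part will be this classification step. The elementary K3 surface analogue involves only one type of contraction, the $(-2)$-curve, whereas in dimension four the list above contains four distinct archetypes, and one must rule out any exotic extremal ray whose contraction falls outside them. This requires fourfold Mori theory to be combined with delicate local analysis of symplectic singularities and with global deformation-theoretic information about the contracted loci, going beyond what the Cone Theorem and Proposition~\ref{prop:chamber} alone provide. I expect this to be the most technically demanding part of the argument, and the step most likely to need genuinely new input to handle uniformly for every deformation class of $\HKT$-type fourfolds.
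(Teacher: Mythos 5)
The statement you are proving is stated in the paper as a \emph{conjecture} (Conjecture~\ref{conj:main}); the paper itself proves only one of your two inclusions, namely $\NE_1(F)\subseteq \rN_E(F,g)$ (Theorem~\ref{theo:main}), via the numerical classification of extremal rays in Theorem~\ref{theo:classifyray}. Your sketch of that direction is essentially the paper's route: perturb by a klt divisor, invoke the Cone Theorem and the classification of extremal contractions of symplectic fourfolds (\cite{WW}, \cite{Nami}, \cite{CMSB}), and pin down $(R,R)\in\{-\tfrac12,-2,-\tfrac52\}$ and the divisibility of $\rho$ by lattice and intersection-theoretic computations (the paper uses the rational double point normal-bundle analysis in the divisorial case and O'Grady's identities on $\Sym^2H^2$, $q^{\vee}$, $c_2(F)$ in the Lagrangian $\bP^2$ case, not a direct Fujiki-constant count). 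So that half, which you flag as ``the step most likely to need genuinely new input,'' is in fact carried out in Sections~\ref{sect:scientific} and \ref{sect:half}.

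The genuine gap is your forward inclusion $\rN_E(F,g)\subseteq \NE_1(F)$, which the paper does not prove and which remains the open content of Conjectures~\ref{conj:main} and \ref{conj:nodal}. Your argument exhibits the three numerical types on a model $S^{[2]}$ and then asserts that ``since $g$ is held fixed along the family, the curves\ldots persist as algebraic cycles in nearby fibres.'' This does not follow: along a deformation in which $R$ (equivalently $\rho$) stays of Hodge type, the \emph{class} remains a Hodge class, but the existence of an effective rational curve representing it is exactly what must be proved; curves, Lagrangian planes and uniruled divisors do not automatically deform sideways (the cited deformation results, e.g.\ for Lagrangian planes, require hypotheses and do not cover all three types in all deformation-equivalent $(F,g)$, nor is it clear one can connect an arbitrary polarized $(F,g)$ to a chosen $S^{[2]}$ through a family keeping both $g$ and the given class algebraic). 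In addition, your dictionary between archetypes and numerical types is scrambled: a line in a Lagrangian $\bP^2$ has $\left(L,L\right)=-\tfrac52$ (this is precisely the computation in Theorem~\ref{theo:classifyray}), while the fibers of divisorial contractions account for the $-\tfrac12$ and $-2$ cases (e.g.\ the Hilbert--Chow exceptional fiber has square $-\tfrac12$); there is no divisorial archetype with square $-\tfrac52$. So the proposal does not prove the conjecture; at best it reproves the inclusion established in Theorem~\ref{theo:main}, with the other inclusion left as a genuine open problem.
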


The classes in $E^*$ that are extremal in (the closure of) $\rN_E(F,g)$
will be called {\em nodal classes} (cf. \cite[1.4]{LP}).
The nodal classes are denoted $E^*_{\nod}$ and the
corresponding classes in $E$ are denoted $E_{\nod}$.

\begin{conj}[Nodal classes conjecture]\label{conj:nodal}
Each nodal class $R\in E^*_{\nod}$ represents a rational curve contracted
by a birational morphism $\beta$ given by sections of 
$\cO_F(m\lambda),m\gg 0$,
where $\lambda$ is any nef and big divisor class with $R.\lambda =0$.  
\begin{enumerate}
\item If $\left(R,R\right)=-\frac{1}{2},-2$
(i.e., the corresponding $\rho$ is a $(-2)$-class) then $\rho$
is represented by a family of rational curves parametrized
by a K3 surface, which blow down to 
rational double points.
\item If $\left(R,R\right)=-\frac{5}{2}$
(i.e., the corresponding $\rho$ is a $(-10)$-class) then $\rho$
is represented by a family of lines
contained in a $\bP^2$
contracted to a point.
\end{enumerate}
\end{conj}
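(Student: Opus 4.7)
The plan is to combine basepoint-freeness on $\lambda$ with the classification of birational contractions of holomorphic symplectic fourfolds, and to use deformation to $S^{[2]}$ to identify the geometric structure of the exceptional locus. Fix $R \in E^*_{\nod}$ with corresponding primitive $\rho \in E_{\nod}$, and let $\lambda$ be a nef and big divisor class with $R \cdot \lambda = 0$. Since $F$ is smooth with $K_F = 0$, basepoint-freeness (already invoked in Section~\ref{sect:generalities}) produces a birational morphism $\beta = \Phi_{|m\lambda|} : F \to Y$ for $m \gg 0$. The curves contracted by $\beta$ lie on the face $\lambda^{\perp} \cap \oNE_1(F)$, which by Corollary~\ref{coro:curvepolyhedral} is locally rational polyhedral (the argument in the proof of Proposition~\ref{prop:polyhedralifmoving} shows that since $\lambda$ is big, the relevant curve classes have negative Beauville-Bogomolov square). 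The Cone Theorem then spans each extremal ray on this face by a rational curve class, and the first substantive task is to show that $R$, or a positive integer multiple, is the class of an effective rational curve contracted by $\beta$; I would approach this by matching the lattice-theoretic type of $R$ against the numerical invariants of the extremal rays produced by the Cone Theorem, using that the three types in $E^*_{\nod}$ have distinct Beauville-Bogomolov squares.

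Next I would invoke the classification of birational contractions of holomorphic symplectic fourfolds due to Wierzba, Wierzba-Wisniewski, Namikawa, and others \cite{W,WW,Nami,CMSB}: every such $\beta$ is either divisorial with exceptional divisor a $\bP^1$-bundle whose base inherits a holomorphic symplectic form and hence is a K3 surface, or small with every positive-dimensional fiber a Lagrangian $\bP^2$. The invariants $\left(\rho, \rho\right)$ and the divisibility of $\rho$ in $H^2(F, \bZ)$ are computable from the normal geometry of the exceptional locus via Fujiki's relation and the integral formula~\eqref{eqn:integral}: a divisorial contraction whose exceptional divisor is $2$-divisible in cohomology yields $\left(\rho, \rho\right) = -2$ with $\left(\rho, H^2(F,\bZ)\right) = 2\bZ$; a divisorial contraction with non-$2$-divisible exceptional divisor yields $\left(\rho, \rho\right) = -2$ with $\left(\rho, H^2(F,\bZ)\right) = \bZ$; and a Mukai flop along a Lagrangian $\bP^2$ yields the $(-10)$-class with divisibility $2$. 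These three cases match the conjecture exactly.

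Finally, to identify the K3 family in case~(1) and the $\bP^2$ in case~(2) with the structures named in the statement, I would deform $(F, \rho)$ to a pair $(S^{[2]}, \rho_0)$ where the contractions are classical: the Hilbert-Chow morphism $S^{[2]} \to S^{(2)}$ for the $2$-divisible $(-2)$-class, a contraction induced by a $(-2)$-curve on $S$ for the non-$2$-divisible $(-2)$-class, and a Mukai flop along some $\bP^2 \subset S^{[2]}$ arising from a special linear series on $S$ for the $(-10)$-case. Proposition~\ref{prop:chamber} guarantees that the chamber decomposition behaves well under deformation, so the contraction type persists. The \emph{main obstacle} is precisely this last step: controlling the global geometry of the exceptional locus under deformations of $(F, \rho)$ and ruling out jumping of fiber dimensions, singular K3 bases, or reducible $\bP^2$ fibers. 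This would require a relative version of the four-dimensional classification of symplectic contractions and a constancy argument for the numerical invariants of the exceptional fibers across the deformation --- the gap between the known fourfold classification and the uniform pointwise description claimed in the conjecture.
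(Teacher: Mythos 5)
There is a genuine gap here, and it is worth being precise about its nature: the statement you are proving is stated in the paper as a \emph{conjecture} (attributed to \cite{HT01}) and is not proved there; the paper's actual results run in the opposite direction. Theorem~\ref{theo:classifyray} starts from an honest extremal ray $R$ of $\oNE_1(F)$ admitting a contraction and deduces the numerical constraints $\left(R,R\right)=-1/2,-2,-5/2$ together with the lattice type of $\rho$; Theorem~\ref{theo:main} then deduces $\NE_1(F)\subseteq \rN_E(F,g)$. Conjecture~\ref{conj:nodal} asks for the converse: that every class which is \emph{numerically} of nodal type is actually represented by an effective rational curve contracted by a birational morphism, with the exceptional locus having the stated global structure (K3 family of rational curves, or a Lagrangian $\bP^2$). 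Your proposal assumes this converse at its first substantive step. You take for granted a nef and big $\lambda$ with $R.\lambda=0$ and then say you would show that $R$ or a multiple is an effective contracted curve ``by matching the lattice-theoretic type of $R$ against the numerical invariants of the extremal rays produced by the Cone Theorem.'' But nothing in the paper (or in the Cone Theorem) produces an extremal ray in a \emph{prescribed} integral class: distinct classes in $E^*_{\nod}$ can share the same Beauville--Bogomolov square, and a priori $R^{\perp}$ need not support a facet of the nef cone at all --- the assertion that it does is essentially the unproven inclusion $\rN_E(F,g)\subseteq \oNE_1(F)$, i.e.\ the other half of Conjecture~\ref{conj:main}. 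So the existence step is not a technicality to be matched away; it is the core open content of the statement.

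Beyond that, two further steps are asserted rather than argued. The dictionary in your second paragraph (exceptional divisor $2$-divisible $\Leftrightarrow$ $\left(\rho,H^2(F,\bZ)\right)=2\bZ$, and the base of the divisorial contraction being a K3 surface rather than merely a surface carrying a symplectic form on its smooth locus) does not follow from Fujiki's relation and \eqref{eqn:integral} alone and needs a genuine argument; the paper's own divisorial analysis only pins down $D.C\in\{-1,-2\}$ and then uses lattice integrality, without identifying the base. And your final deformation step --- transporting the contraction from $S^{[2]}$ to $(F,\rho)$ while controlling fiber dimensions and irreducibility of the exceptional locus --- you yourself flag as the main obstacle; that is exactly the kind of statement (a relative, deformation-invariant classification of the contraction together with effectivity of the deformed curve class) that would constitute a proof of the conjecture, and it is not supplied here. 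In short: the proposal is a reasonable strategy outline, but it does not close the two essential gaps (effectivity of nodal classes, and persistence of the geometric structure under deformation), and no proof of this statement exists in the paper to compare it against.
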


The remaining generators of the cone of curves are given by:
\begin{conj}\label{conj:abelian} \cite[3.8]{HT01}
Let $\lambda$ be a primitive class on the boundary of
the nef cone with $\left(\lambda,\lambda\right)=0$.  
Then the corresponding line bundle
$\cO_F(\lambda)$ has no higher cohomology and its
sections yield a morphism
$$F \ra {\bP}^2$$
whose generic fiber is an abelian surface.
\end{conj}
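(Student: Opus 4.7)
The plan is to combine the numerical information forced by isotropy of $\lambda$ with Matsushita's structure theorem for Lagrangian fibrations, conditional on semiampleness of $\lambda$, which is the real content of the statement. First I would apply Fujiki's formula $\alpha^4 = c_0\left(\alpha,\alpha\right)^2$ to deduce $\lambda^4 = 0$, and by expanding $(a\lambda + bH)^4$ for an ample class $H$ and comparing with $c_0\left(a\lambda+bH,a\lambda+bH\right)^2$, extract the further relations $\lambda^3 \cdot H = 0$ and $\lambda^2 \cdot H^2 > 0$. Consequently the numerical dimension of the nef class $\lambda$ is exactly two, matching the expected behaviour of the pullback of a polarization along a Lagrangian fibration of a four-fold; this is the first consistency check and fixes the target dimension in the eventual morphism.

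Granting semiampleness, some multiple $m\lambda$ defines, after Stein factorization, a morphism $f:F \ra B$. Matsushita's theorem on surjective morphisms from holomorphic symplectic manifolds to a lower-dimensional base then forces $\dim B = 2$, forces the smooth fibers of $f$ to be Lagrangian abelian surfaces, and gives $f_*\cO_F = \cO_B$ together with $R^i f_*\cO_F \simeq \Omega^i_B$. In the deformation class of $\HKT$ one further expects $B \simeq \bP^2$, following Hwang--Matsushita type results on bases of Lagrangian fibrations on irreducible holomorphic symplectic manifolds. The vanishing $H^i(F,\cO_F(\lambda))=0$ for $i>0$ then follows by the Leray spectral sequence together with Bott-type vanishing for $\Omega^j_{\bP^2}(1)$ on $\bP^2$, and the identification of the base with $\bP^2$ gives an $H^0(F,\cO_F(\lambda))$ of dimension three, as required for the claimed morphism.

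The main obstacle is precisely the semiampleness of $\lambda$, which is the hyperk\"ahler SYZ problem on the null cone of the Beauville--Bogomolov form. This is exactly the case flagged in the Remark after Corollary~\ref{coro:converse} as lying outside the reach of the log minimal model program methods of Section~\ref{sect:LogMMP}: the finite-generation argument of Proposition~\ref{prop:chamber} genuinely relies on the bigness $\left(\lambda,\lambda\right)>0$ that is excluded here, so semiampleness cannot be read off from finite generation of the section ring. My approach would be to use Theorem~\ref{theo:makeKahler} to transport $\lambda$ to a class on a birational model $F'$ for which a Lagrangian fibration is already known, for instance an $\HKT$ carrying a Beauville--Mukai system with the corresponding isotropic invariant, and then to spread that fibration out in a family of deformations and specialize back to $F$. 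Controlling how the Lagrangian fibration behaves across the boundary of $\ocBK_F$ is the delicate step — and absent a geometric construction, producing the morphism in the first place appears to require input beyond the methods of the present paper.
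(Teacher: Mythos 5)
There is no proof to compare against: the statement you were asked to prove is stated in the paper as a conjecture (attributed to \cite[3.8]{HT01}), and the paper explicitly leaves it open --- the remark following Corollary~\ref{coro:converse} says that the case $\left(\lambda,\lambda\right)=0$ remains open precisely because the finite-generation machinery of Section~\ref{sect:LogMMP} needs bigness, i.e.\ $\left(M,M\right)>0$. Your proposal correctly diagnoses this: the Fujiki-formula computation (giving $\lambda^4=\lambda^3\cdot H=0$ and $\lambda^2\cdot H^2>0$, hence numerical dimension two), Matsushita's structure theorem for the resulting fibration, Hwang-type identification of the base with $\bP^2$, and the Leray/Bott argument for the vanishing of higher cohomology of $\cO_F(\lambda)$ are all standard and sound, but they are conditional on semiampleness of the isotropic nef class, which is the hyperk\"ahler SYZ problem and the entire content of the conjecture.

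That gap is genuine and your sketched route does not close it. Theorem~\ref{theo:makeKahler} applies to `very general' classes in the positive cone $\cC_F$, whereas $\lambda$ is a rational class on the null cone and on the boundary of the nef cone, so it is excluded on two counts; and transporting a known Beauville--Mukai system from some birational or deformation-equivalent model back to $F$ requires controlling Hodge classes and fibrations under specialization across the boundary of $\ocBK_F$, which is exactly where the difficulty sits --- nothing in Sections~\ref{sect:generalities}--\ref{sect:LogMMP} gives that control. So your assessment in the final sentence is the correct conclusion: the statement cannot be established with the methods of this paper, and indeed the paper never claims it. (For this deformation class the conjecture was settled only later, by wall-crossing/lattice-theoretic methods well beyond the present techniques.) In short: your write-up is an honest conditional reduction, not a proof, and it matches the paper in the sense that the paper offers no proof either.
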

This was subsequently generalized to higher dimensions by
Huybrechts \cite{GHJ}.

\section{Deriving $(-2)$ and $(-10)$-classes from first principles}
\label{sect:scientific}
In this section, we give a conceptual explanation for the occurence
of $(-2)$ and $(-10)$-classes in our conjectural description
of the ample cone.  This description is crucial for the proof of our
main theorem in Section~\ref{sect:half}.  

We recall a definition due to O'Grady \cite{ogrady}.  All products of
cohomology classes are to be taken in the cohomology ring unless otherwise
specified:
\begin{defi}
A {\em numerical $\HKT$} is an irreducible holomorphic symplectic fourfold $F$ such that
there exists an isomorphism
$$\psi:H^2(F,\bZ) \stackrel{\sim}{\ra} H^2(S^{[2]},\bZ)$$
with $\psi(\alpha)^4=\alpha^4$ for each $\alpha \in H^2(F,\bZ)$.
Here $S$ is a K3 surface and $S^{[2]}$ its Hilbert scheme of length-two subschemes.
\end{defi}
We recall one key
property  of these manifolds proved by O'Grady \cite[\S 2]{ogrady}:
$H^2(F,\bZ)$ admits a canonical integral primitive quadratic form $\left(,\right)$, the
{\em Beauville-Bogomolov form}, such that
\begin{equation}
H^2(F,\bZ)_{\left(\, , \, \right)}\simeq  U^{\oplus 3} \oplus_{\perp} (-E_8)^{\oplus 2}
\oplus_{\perp} (-2) \label{eqn:thelattice}
\end{equation}
where $U$ is the hyperbolic plane and $E_8$ the positive-definite integral lattice
associated to the corresponding root system.  Moreover, we have
$$\alpha^4=3\left(\alpha,\alpha\right)^2$$
for each $\alpha \in H^2(F,\bZ).$
This form a induces $\frac{1}{2}\bZ$-valued quadratic form on
$H_2(F,\bZ)$ by duality:
$$H_2(F,\bZ)_{\left(\, , \,\right)}\simeq  U^{\oplus 3} \oplus_{\perp} (-E_8)^{\oplus 2}
\oplus_{\perp} (-1/2).$$

We recall additional properties of numerical $\HKT$'s due to O'Grady
\cite[\S 2]{ogrady}:
\begin{itemize}
\item{The intersection product induces an isomorphism 
$$
\Sym^2 H^2(F,\bQ) \stackrel{\sim}{\ra} H^4(F,\bQ) 
$$
and the intersection form on the middle cohomology is given by the formula
$$\alpha_1\alpha_2.\alpha_3\alpha_4=
\left(\alpha_1,\alpha_2\right)\left(\alpha_3,\alpha_4\right)+
\left(\alpha_1,\alpha_3\right)\left(\alpha_2,\alpha_4\right)+
\left(\alpha_1,\alpha_4\right)\left(\alpha_2,\alpha_3\right)$$
for all $\alpha_1,\alpha_2,\alpha_3,\alpha_4 \in H^2(F,\bZ).$}
\item{There is a distinguished class $q^{\vee} \in H^4(F,\bQ) \cap H^{2,2}(F,\bC)$
such that
$$q^{\vee}.\alpha_1.\alpha_2= 25\left(\alpha_1,\alpha_2\right)$$
for all $\alpha_1,\alpha_2 \in H^2(F,\bZ)$.  This is a rational multiple of the dual
Beauville-Bogomolov form induced on $H_2(F,\bZ)$ via Poincar\'e duality.
The class $q^{\vee}$ is the unique Hodge class (up to scalar multiplication)
in the middle cohomology of a general numerical $\HKT$.}
\item{We have the formulas
$$c_2(F)=\frac{6}{5} q^{\vee}, \quad q^{\vee}.q^{\vee}=23 \cdot 25.$$}
\end{itemize}

\begin{theo} \label{theo:classifyray}
Let $F$ be a numerical $\HKT$.  Suppose
$R\in \rN_1(F,\bZ)$ is an extremal ray such that
there exists a Kawamata log terminal effective divisor $B\subset F$
with $B.R < 0$.
If $\left(R,R\right)<0$
then we have
$$\left(R,R\right)=-1/2,-2,-5/2.$$
Moreover, $\rN^1(F,\bZ)$ contains an element $\rho$ satisfying
one of the following:
\begin{itemize}
\item{$\left(\rho,\rho\right)=-2$ and $\left(\rho,H^2(F,\bZ)\right)=\bZ$;}
\item{$\left(\rho,\rho\right)=-2$ and $\left(\rho,H^2(F,\bZ)\right)=2\bZ$;}
\item{$\left(\rho,\rho\right)=-10$ and $\left(\rho,H^2(F,\bZ)\right)=2\bZ$.}
\end{itemize}
\end{theo}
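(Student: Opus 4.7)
The plan is to apply the Cone Theorem for KLT pairs to obtain a rational curve representing $R$, then pin down $(R, R)$ via O'Grady's description of the intersection ring of a numerical $\HKT$.

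I would first choose a rational $\epsilon > 0$ small enough that $(F, \epsilon B)$ is KLT, using \cite[8.10]{KSoP}; then $(K_F + \epsilon B) \cdot R = \epsilon B \cdot R < 0$. The Cone Theorem therefore produces an extremal birational contraction $\phi \colon F \to \bar F$ of $R$, generated by the class $[C]$ of a rational curve $C$. Let $\rho \in H^2(F, \bQ) \cap H^{1,1}(F, \bC)$ be the Beauville-Bogomolov dual of $R$, so $(\rho, v) = R \cdot v$ for all $v \in H^2(F, \bZ)$ and $(R, R) = (\rho, \rho)$. Because the lattice in \eqref{eqn:thelattice} has discriminant $2$, either $\rho$ itself is primitive integral, giving $(\rho, H^2(F, \bZ)) = \bZ$, or $\rho_0 := 2\rho \in H^2(F, \bZ)$ is primitive, giving $(\rho_0, H^2(F, \bZ)) = 2\bZ$ and $(R, R) = (\rho_0, \rho_0)/4$.

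Next, I would analyze the locus $V \subset F$ swept by deformations of $C$. By Mori deformation theory, since $K_F \cdot C = 0$, the curve $C$ moves in a family of dimension at least $\dim F - 3 = 1$ through a general point, forcing $\dim V \geq 2$; on the other hand $(R, R) < 0$ implies $V \neq F$. So $V$ is either a prime uniruled divisor $D$ (divisorial case, $D \cdot C < 0$) or a two-dimensional fiber of a small contraction. In each case I would expand the relevant intersection numbers, namely $D \cdot C$ and $D^2 \cdot \alpha^2$ in the divisorial case, and intersections involving $c_2(F) = \tfrac{6}{5} q^{\vee}$ (with $q^{\vee} \cdot q^{\vee} = 23 \cdot 25$) in the small case, in terms of Beauville-Bogomolov pairings using the Fujiki relation $\alpha^4 = 3(\alpha, \alpha)^2$ and the identity $\alpha_1\alpha_2 \cdot \alpha_3\alpha_4 = \sum (\alpha_i, \alpha_j)(\alpha_k, \alpha_l)$. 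Positivity constraints (uniruledness of $D$ in the divisorial case; $c_2(F) \cdot V > 0$ and the Lagrangian condition $\sigma|_V = 0$ in the small case), together with the integrality of all pairings in the lattice \eqref{eqn:thelattice}, then leave exactly two possible integral values: $(\rho_0, \rho_0) = -2$ in the divisorial case and $(\rho_0, \rho_0) = -10$ in the small case.

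Matching these two integral values with the two possible divisibilities of $\rho$ yields exactly the three listed possibilities for the pair $((\rho, \rho), (\rho, H^2(F, \bZ)))$ and for $(R, R) \in \{-2, -1/2, -5/2\}$. The main obstacle will be step two: the trivial inequality $D \cdot C < 0$ alone rules out only large negative values of $(\rho, \rho)$, so the sharp restriction to $\{-2, -10\}$ requires exploiting the complete system of O'Grady's relations in combination with positivity and lattice integrality to exclude intermediate candidates such as $-4, -6, -8$.
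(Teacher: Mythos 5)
Your opening moves match the paper: perturb $B$ to a KLT pair, invoke the Cone Theorem to get an extremal contraction with a rational curve generating $R$, and split into a divisorial and a small case. But the heart of the theorem is exactly the part you defer to ``positivity constraints together with integrality,'' and that is where the argument has a genuine gap. In the divisorial case, uniruledness of $D$ plus lattice integrality do \emph{not} pin down $\left(\rho,\rho\right)$; a priori a contracted uniruled divisor could have any negative square. The paper gets the sharp bound from the structural classification of extremal contractions of holomorphic symplectic fourfolds (\cite{WW}, \cite{Nami}, \cite{CMSB}): a divisorial contraction is generically a family of rational double points over a surface $T$, extremality together with the intersection analysis of \cite[5.1]{W} forces the generic fiber to be an $A_1$ or $A_2$ configuration, and the normal bundle computation $\cN_{\tilde{D}/F}|C\simeq\cO_{\bP^1}(-2)$ yields $D.C=-1$ or $-2$. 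Only then does the integrality bookkeeping (with $C=mR$, $D=n\rho$, and the divisibility factor $r=1,1/2$) take over, and even there the exclusion of $\left(\rho,\rho\right)=-4$ at divisibility $2$ is not generic ``integrality'' but a specific congruence: writing $\rho=2v+a\delta$ in the lattice (\ref{eqn:thelattice}) forces $\left(\rho,\rho\right)\equiv -2 \pmod 8$. Your plan contains no mechanism that would reproduce either ingredient, and your closing sentence concedes precisely this: you do not have a way to exclude $-4,-6,-8$, and O'Grady's relations plus positivity alone will not do it.

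The small case has the same problem one level deeper. You only know the contracted locus is a two-dimensional fiber; the paper uses the classification results to know it is a smooth Lagrangian $\bP^2$, and this is indispensable: the three Diophantine equations come from $[\bP^2].[\bP^2]=c_2(\Omega^1_{\bP^2})=3$, from $c_2(T_F)|\bP^2=-3$ via the normal bundle sequence, and from $\lambda.\lambda.[\bP^2]=\left(\lambda.L\right)^2$, none of which make sense without knowing the surface is $\bP^2$. Moreover, before any of these can be used one needs the key step you omit entirely: deforming $F$ so that $\lambda$ stays Hodge, deforming the plane along, and concluding that $[\bP^2]=aq^{\vee}+b\lambda^2$ because $q^{\vee}$ and $\lambda^2$ are the only Hodge classes in $H^4$ for the generic such deformation. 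Only then does eliminating $a,b$ give $23\left(\lambda,\lambda\right)^2+20\left(\lambda,\lambda\right)-2100=0$, hence $\left(\lambda,\lambda\right)=-10$ and $\left(L,L\right)=-5/2$. So while your framing (Cone Theorem, dichotomy, lattice bookkeeping) is the right skeleton, the proposal is missing the two substantive inputs -- the geometric classification of the contractions and the $q^{\vee},\lambda^2$ decomposition with its resulting Diophantine system -- without which the restriction to $\{-1/2,-2,-5/2\}$ cannot be derived.
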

\begin{proof}
The Cone Theorem \cite[3.7]{KM} implies that there exists an extremal
contraction 
$\beta:F \ra F'$ with $\beta_*R=0$ and $\Pic(F/F')\simeq \bZ.$
Since $\left(R,R\right)<0$ the line bundle $L$ contracting
$\rho$ has $\left(L,L\right)>0$;  indeed, this is a consequence
of the fact that $\left(,\right)$ has signature 
$(1,\mathrm{rank}(\rN^1(F,\bZ))-1)$ on the N\'eron-Severi group.  

We use the partial description of extremal contractions \cite[1.1]{WW},
\cite[1.4,1.11]{Nami}, \cite{CMSB}.  The morphism $\beta':F \ra F'$
satisfies one of the following alternatives:
\begin{itemize}
\item{$\beta'$ is a divisorial contraction taking the
exceptional divisor to a surface $T\subset F'$.  At each smooth
point of $T$, $\beta'$ is locally a contraction to a two-dimensional
rational double point.}
\item{$\beta'$ is a small contraction, taking a smooth Lagrangian
$\bP^2 \subset F$ to an isolated singularity of $F'$.}
\end{itemize}

In the divisorial case, the smooth locus of $T$ has codimension $\ge 2$
complement and admits a holomorphic symplectic form.

Consider first the divisorial case.  
Suppose that $D$ is the exceptional divisor of $\beta$;
the generic fiber of $\beta|D:D\ra T$ is an ADE-configuration
of $\bP^1$'s.  Since $\beta$ is extremal, the fundamental
group of $T^{sm}$ acts transitively on the components
of $\beta^{-1}(t)$ for $t\in T$ generic.  An analysis
of intersection numbers implies
that only $A_1$ and $A_2$ configurations may occur (see \cite[5.1]{W}).

Let $\tilde{D}$ denote the normalization of $D$ and  
$$\tilde{D} \stackrel{\gamma}{\ra} \tilde{T} \ra T$$
the Stein factorization of $\beta|\tilde{D}$.  Then the generic fiber 
$C=\gamma^{-1}(t)$ is isomorphic to $\bP^1$.  However, the 
classification of rational double points yields
$$\cN_{\tilde{D}/F}|C \simeq \cO_{\bP^1}(-2),$$
hence
$$D.C =-1,-2.$$
This analysis does not require $F$ to be a numerical $\HKT$, only an 
irreducible holomorphic symplectic fourfold.  

We will now use integrality properties of the Beauville-Bogomolov form.
Let $\rho\in \rN^1(F,\bZ)$ denote the primitive class identified with 
a positive multiple of the extremal ray $R$ via the Beauville-Bogomolov form.  
Precisely, for each $A\in H^2(F,\bZ)$ we have
$$A.R=r\left(A,\rho\right)
$$
with $r=1,1/2$ depending on whether $\left(R,H_2(F,\bZ)\right)=\bZ,\frac{1}{2}\bZ.$
Clearly $C=mR$ and $D=n\rho$ for $m,n \in \bN$, and we have
$$D.C=mn R.\rho=mnr \left(\rho,\rho\right).$$
The following cases may occur:
\begin{enumerate}
\item[(I)]{$D.C=-1$:
\begin{enumerate}
\item{$r=1$:  Here $m=n=1$ and $R.\rho=-1$, hence $\left(\rho,\rho\right)=-1$
which is impossible because $\left(,\right)$ is even-valued.}
\item{$r=1/2$: Here $mn\left(\rho,\rho\right)=-2$ and thus 
$\left(\rho,\rho\right)=-2$. 
We conclude that $\left(R,R\right)=-1/2$.}
\end{enumerate}}
\item[(II)]{$D.C=-2$:
\begin{enumerate}
\item{$r=1$: Here we have $\left(\rho,\rho\right)=-2/mn$ which
forces $m=n=1$ and $\left(\rho,\rho\right)=-2$.  We conclude that
$\left(R,R\right)=-2$.}
\item{$r=1/2$:  Here $\left(\rho,\rho\right)=-4/mn$ so $mn=1$ or $2$.  
However, the lattice (\ref{eqn:thelattice})
does not admit primitive
vectors $\rho$ of length four with $\left(\rho,H^2(F,\bZ)\right)
=2\bZ$.  Indeed, if we had
$$\rho=2v+a\delta,  \quad  2\nmid a$$
with
$$v\in U^{\oplus 3} \oplus (-E_8)^{\oplus 2},
                \left(\delta,\delta\right)=-2, \left(v,\delta\right)=0,
$$
then it would follow that
\begin{equation} \label{eqn:congruence}
\left(\rho,\rho\right)=4\left(v,v\right)-2a^2\equiv -2\pmod{8}.
\end{equation}
We conclude that $mn=2$, 
$\left(\rho,\rho\right)=-2$, and $\left(R,R\right)=-1/2$.}
\end{enumerate}}
\end{enumerate}
This completes the proof in the divisorial case.  

We turn to the case where $\beta:F\ra F'$ is small 
contraction of a Lagrangian $\bP^2$.  Some multiple of the extremal ray $R$
is necessarily the class $L$ of a line in $\bP^2$.  
We shall show that $\left(L,L\right)=-5/2$ which implies
that $R=L$,  completing the proof of the theorem.

Suppose that $\lambda \in H^2(F,\bZ)$ is the unique class with
$$2A.L=\left(A,\lambda\right)
$$
for all $A\in H^2(F,\bZ)$.  We do not assume {\em a priori}
that $\lambda$ is primitive.
Consider a deformation $F_t$ of $F$ for which $[L]\in H_2(F_t,\bZ)$
(or equivalently, $\lambda$) remains a Hodge class.
The Lagrangian plane also deforms in $F_t$ (see \cite{voisin} and \cite{HT01}).  
For a general deformation $F_t$, the only Hodge classes in $H^4(F_t,\bZ)$ are
rational linear combinations  of $q^{\vee}$ and $\lambda^2$.
Indeed, the Torelli map is locally an isomorphism
and $q^{\vee},\lambda^2 \in H^4(F_t,\bQ)$
are the only Hodge classes in $\Sym^2 H^2(F_t,\bZ)$ for generic Hodge structures
on $H^2(F_t,\bZ)$ (see \cite[\S 3]{ogrady} for a detailed proof).  

We may put
\begin{equation} \label{eqn:form}
[\bP^2]=aq^{\vee}+b\lambda^2.
\end{equation}
Geometric properties of the Lagrangian plane translate into algebraic
conditions on the coefficients $a,b$;  we use
the intersection properties of numerical $\HKT$'s listed above:
\begin{itemize}
\item{The normal bundle to any Lagrangian submanifold is equal to its
cotangent bundle.  Thus we have 
$$\ [\bP^2].[\bP^2]=c_2(\Omega^1_{\bP^2})=3$$ 
which implies
$$25\cdot 23a^2+50ab\left(\lambda,\lambda\right)+3b^2\left(\lambda,\lambda\right)^2=3.$$
}
\item{Using the exact sequence
$$0 \ra \cT_{\bP^2} \ra \cT_F|\bP^2 \ra \cN_{\bP^2/F} \ra 0$$
we compute that $c_2(T_F)|\bP^2=-3$.  It follows that
$$-3=\frac{6}{5}(25\cdot 23 a  + 25b\left(\lambda,\lambda\right)).$$}
\item{We know that
$\lambda|\bP^2$ is some multiple of the hyperplane
class, i.e., $\lambda.[\bP^2] =\left( \begin{matrix} \lambda.L \end{matrix}\right) L$.  We deduce that
$$\lambda.\lambda.[\bP^2]=\left( \begin{matrix}\lambda.L \end{matrix} \right)^2 =\left(\lambda,\lambda\right)^2/4.$$
Using formula (\ref{eqn:form}) to evaluate $\lambda.\lambda.[\bP^2]$ we obtain
$$\left(\lambda,\lambda\right)^2/4=25a \left(\lambda,\lambda\right) + 
3b\left( \lambda , \lambda \right)^2.$$}
\end{itemize}
Altogether, we obtain three Diophantine equations in the variables
$\left(\lambda,\lambda\right),a,$ and $b$.  Eliminating $a$ and $b$ and
solving for $\left( \lambda,\lambda\right)$ we obtain the quadratic equation
$$23\left(\lambda,\lambda\right)^2 + 20 \left( \lambda,\lambda \right)- 2100=0$$
with solutions $\left(\lambda,\lambda\right)=-10,210/23$.  Only the first
makes sense.  We conclude that
$\left(L,L\right)=-5/2$ and $\lambda$ is primitive and 
$\left(\lambda,H^2(F,\bZ)\right)=2\bZ$. 
\end{proof}

\section{Applications to ample divisors}
\label{sect:half}

In this section, we prove one implication of Conjecture~\ref{conj:main}:
\begin{theo} \label{theo:main}
Let $(F,g)$ be a polarized irreducible holomorphic symplectic manifold,
deformation equivalent to Hilbert scheme of length-two subschemes of
a K3 surface.  Then we have
$$\NE_1(F)\subseteq \rN_E(F,g).$$
\end{theo}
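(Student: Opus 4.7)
The plan is to split $\NE_1(F)$ by the sign of the Beauville--Bogomolov form and combine the polyhedrality results of Section~\ref{sect:LogMMP} with the ray classification of Theorem~\ref{theo:classifyray}. Every nonzero effective class $R$ satisfies $R.g > 0$ because $g$ is ample, so the polarization condition entering the definition of $\rN_E(F,g)$ is automatic. Moreover, if the Beauville--Bogomolov dual $\rho$ of $R$ has $(\rho,\rho) \ge 0$, then $R$ lies in $\rN_E(F,g)$ directly from the definition. The substantive content of the theorem is therefore the case $(R,R) < 0$.

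By Corollary~\ref{coro:curvepolyhedral}, the subcone $\oNE_1(F) \cap \{R : (R,R) < 0\}$ is locally-finite rational polyhedral; hence every effective class in this region is a nonnegative combination of its extremal rays, each of which remains extremal in $\oNE_1(F)$. For each such extremal ray $R$, I will verify the hypothesis of Theorem~\ref{theo:classifyray}---the existence of a Kawamata log terminal effective divisor $B \subset F$ with $B.R < 0$. That theorem will then return $(R,R) \in \{-1/2,-2,-5/2\}$ together with a primitive class $\rho \in \rN^1(F,\bZ)$ satisfying one of the three defining conditions of $E$. Because $R.g > 0$ forces $(\rho,g) > 0$, we have $\rho \in E$ and $R \in E^* \subset \rN_E(F,g)$, completing the desired inclusion.

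To produce $B$, I follow the recipe of Proposition~\ref{prop:polyhedralifmoving}. First I construct a nef and big class $M$ with $M.R = 0$: since $R$ is extremal in the polyhedral region, the face of $\ocK_F$ dual to $R$ lies in the hyperplane $R^\perp \subset \rN^1(F,\bR)$, and the Hodge-index signature forces $R^\perp$ to meet $\cC_F$; the chamber structure from Proposition~\ref{prop:chamber} then allows $M$ to be chosen on this face with $(M,M) > 0$. A small perturbation $\alpha$ of $M$ inside $\ocBK_F$ with $\alpha.R < 0$, chosen general in the sense of Theorem~\ref{theo:makeKahler}, transports to a K\"ahler class on a birational hyperk\"ahler model $F'$. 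The proper transform on $F$ of a very ample divisor on $F'$ is then an effective divisor $A$ with $A.R < 0$, and $B := \epsilon A$ is Kawamata log terminal for any sufficiently small rational $\epsilon > 0$ with $1/\epsilon > \max_{x\in F}\mathrm{mult}_x(A)$, by \cite[8.10]{KSoP}.

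The main obstacle is guaranteeing that the supporting nef class $M$ can be taken big. The only way this could fail is if the face of $\ocK_F$ dual to $R$ were to lie entirely on the null cone $\{(M,M)=0\}$; but such faces are exactly those arising from Lagrangian-fibration contractions (cf.\ Conjecture~\ref{conj:abelian}), and force $(R,R) = 0$, which is outside our case. With $M$ in hand, the perturbation across $\ocBK_F$ and the birational transfer argument proceed as in Proposition~\ref{prop:polyhedralifmoving}, the KLT hypothesis is verified, and Theorem~\ref{theo:classifyray} delivers the ray classification that closes the proof.
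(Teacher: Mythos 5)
Your reduction to extremal rays is where the argument breaks. You claim that, by Corollary~\ref{coro:curvepolyhedral}, every effective class $R$ with $\left(R,R\right)<0$ is a nonnegative combination of extremal rays of $\oNE_1(F)$ lying in the negative-square region. Local finite rational polyhedrality of $\oNE_1(F)\cap\{\left(R,R\right)<0\}$ gives no such decomposition: it only describes the boundary structure of $\oNE_1(F)$ near rays in that region, while the region itself is not convex and the full cone of curves is in general not polyhedral (it has a ``round'' part coming from the positive cone, as for K3 surfaces). An irreducible curve class of negative square may lie in the interior of $\oNE_1(F)$, or may only decompose using boundary rays of square zero or positive square; in either case your citation yields nothing. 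In fact, the statement you need --- every effective class is a nonnegative combination of classes in $E^*$ and of classes with nonnegative square --- is essentially the theorem itself, so the primal route requires an argument you have not supplied. The paper avoids this entirely by dualizing: it suffices to show that any divisor $M$ with $\left(M,M\right)>0$ and $M.R>0$ for all $R\in E^*$ is ample. One runs the segment $tM+(1-t)g$ from the polarization $g$; if $M$ were not ample, Proposition~\ref{prop:chamber} makes the nef cone locally rational polyhedral inside $\cC_F$, the crossing point is a big nef class on a facet (its square is automatically positive, being a convex combination of $M$ and $g$), the dual extremal ray has negative square by the signature, and Theorem~\ref{theo:classifyray} forces that ray into $E^*$, contradicting $M.R>0$ and $g.R>0$.

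A secondary problem is your treatment of the case where the face of $\ocK_F$ dual to $R$ might lie entirely on the null cone: you dismiss it by appealing to Conjecture~\ref{conj:abelian}, which is a conjecture, not an available tool, so this step is not a proof either. In the paper's dual arrangement the issue never arises, because the supporting nef class is produced by the segment from $g$ and has positive square by construction. On the positive side, your verification of the Kawamata log terminal hypothesis of Theorem~\ref{theo:classifyray} --- transporting a very ample divisor from a birational model via a general class $\alpha\in\cBK_F$ with $\alpha.R<0$ and scaling by a small $\epsilon$, as in Proposition~\ref{prop:polyhedralifmoving} --- is exactly the construction the paper uses (and leaves implicit in the proof of the main theorem); that part of your write-up is sound and would be a useful supplement to the dual argument.
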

Equivalently, the divisors predicted to be ample by our conjectures are indeed ample.

\begin{proof}
Let $M$ be a divisor such that
\begin{itemize}
\item{$\left(M,M\right)>0$; and}
\item{$M.R>0$ for each $R \in E^*$.}
\end{itemize}
The first condition implies that $M.R>0$ for $R \neq 0 \in \rN_1(F,\bZ)$ with
$\left(R,R\right) \ge 0$.  Indeed, $\left(,\right)$ has signature $(1,\dim \rN_1(F,\bR)-1)$
on $\rN_1(F,\bR)$.  
To prove the Theorem, it suffices to show that $M$ is ample on $F$.  

Suppose that $M$ fails to be ample.  After a small perturbation of $g$,
the line segment
$$tM+(1-t)g, \quad t \in [0,1]$$
meets the boundary of the ample cone of $F$ in the interior of the facet of the nef cone.
Indeed, Proposition~\ref{prop:chamber} shows that $\ocK_F \cap \cC_F$ 
is locally-finite rational polyhedral.  Under these assumptions
$$\tau:=\sup \{t: tM+(1-t)g \text{ is ample} \}$$
is rational.  Let $R$ be the (primitive, integral) generator of
the extremal ray corresponding to our facet;  we have $\left(R,R\right)<0$.  
Theorem~\ref{theo:classifyray} implies that
$$(R,R)=-1,-2,-5/2$$
whence $R \in \rN_E(F,g)$.  
\end{proof}

\begin{rema}
The underlying techniques here are reminiscent of those used in the proof that 
`minimal models are connected by flops' \cite{Kaw07} \cite[1.1.3]{BCHM}.  
\end{rema}

\bibliographystyle{plain}
\bibliography{DK3new}

\end{document}